 \newcommand{\ROM}[1]{\mathrm{\uppercase\expandafter{\romannumeral#1}}}
\numberwithin{equation}{section} \theoremstyle{plain}
\newtheorem{thm}{Theorem}[section]
\newtheorem{pro}{Proposition}[section]
\newtheorem{lem}{Lemma}[section]
\newtheorem{cor}{Corollary}[section]
\theoremstyle{definition}
\newtheorem{defn}{Definition}[section]
\newtheorem{exmp}{Example}[section]
\theoremstyle{remark}
\newtheorem{rem}{Remark}[section]
\newtheorem{ack}{Acknowledgements}   
\title[Finiteness theorems for equifocal hypersurfaces]
{Finiteness theorems for equifocal hypersurfaces}
\author[J. Q. Ge]{Jianquan Ge}
\address{School of Mathematical Sciences, Laboratory of Mathematics and Complex Systems, Beijing Normal
University, Beijing 100875, P.R. CHINA} \email{jqge@bnu.edu.cn}
\author[C. Qian]{Chao Qian}
\address{School of Mathematical Sciences, Laboratory of Mathematics and Complex Systems, Beijing Normal
University, Beijing 100875, P.R. CHINA}
\email{qianchao\_1986@163.com}
\author[Z. Z. Tang]{Zizhou Tang}
\address{School of Mathematical Sciences, Laboratory of Mathematics and Complex Systems, Beijing Normal
University, Beijing 100875, P.R. CHINA} \email{zztang@bnu.edu.cn}
 \subjclass[2010]{ 53C40, 53C20.}
\date{}
\keywords{equifocal hypersurface, isoparametric hypersurface, focal point, finiteness.}
\thanks {The project is partially supported by the NSFC ( No.11071018 and No.11001016
), the SRFDP, and the Program for Changjiang Scholars and Innovative
Research Team in University.}
\begin{document}
\maketitle

\begin{abstract}
In this paper, we give a finiteness result on the diffeomorphism
types of curvature-adapted equifocal hypersurfaces in a simply connected compact
 symmetric space. Furthermore, the condition curvature-adapted can be dropped if the symmetric space is of rank one.
\end{abstract}

\section{Introduction}
\label{introduction}
A hypersurface $M^n$ in a real space form $N^{n+1}(c)$ with constant sectional curvature $c$ is said to be \emph{isoparametric} if
it has constant principal curvatures. Since the work of Cartan and M{\"u}nzner,
the subject of isoparametric hypersurfaces especially in the spherical case is rather fascinating to geometers. Hitherto, the classification problem has been almost completed except for one case
(see \cite{Th00} and \cite{Ce} for excellent surveys and \cite{CCJ07}, \cite{Imm08}, \cite{Chi23}, \cite{Miy09}, \cite{GX10} for recent progresses and applications).

In a general Riemannian manifold, a hypersurface is called \emph{isoparametric} if its nearby
parallel hypersurfaces have constant mean curvature. Note that this definition coincides with that in the case of real space forms above by a theorem of Cartan (cf. \cite{GTY}).  In particular, isoparametric hypersurfaces in a simply connected compact symmetric space have been found identical with equifocal hypersurfaces that introduced by Terng and Thorbergsson \cite{TT95}. In fact, they also introduced equifocal submanifolds of high codimensions and established similar structural results as the classical case of isoparametric hypersurfaces and submanifolds. It is worth mentioning that \cite{Tang98} obtained the possible values of the
multiplicities $(m_1,m_2)$ for equifocal hypersurfaces in rank two symmetric spaces , and \cite{Christ} generalized Thorbergsson's
result of the homogeneity of isoparametric submanifolds
in Euclidean spaces of codimension at least two (see \cite{Th91}) by showing that equifocal submanifolds in simply connected compact symmetric spaces of high codimensions must be homogeneous, and thus can be classified. However, the classification of equifocal hypersurfaces is still far from being reached.

In this paper, we endeavor to make some progress towards this classification problem by proving the finiteness of the diffeomorphism types of equifocal hypersurfaces. Our finiteness result relies on an additional condition that the equifocal hypersurfaces should be curvature-adapted. Nevertheless, our Theorem \ref{th1} generalizes the finiteness theorem for isoparametric hypersurfaces in spheres with four distinct principal curvatures given by Wu \cite{Wu94}, since every hypersurface in a sphere is curvature-adapted. Notice that our result also covers the finiteness conclusion for isoparametric hypersurfaces in spheres with six distinct principal curvatures which can not be derived by the method of Wu \cite{Wu94}. Now we state the finiteness theorem as the following.
\begin{thm}\label{th1}
Given a simply connected compact symmetric space $N$, there
are only finitely many diffeomorphism classes of curvature-adapted equifocal hypersurfaces in $N$.
\end{thm}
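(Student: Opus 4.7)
The plan is to invoke Cheeger's finiteness theorem: the closed Riemannian $n$-manifolds satisfying $|K| \leq \Lambda$, $\mathrm{diam} \leq D$, and $\mathrm{vol} \geq v$ fall into only finitely many diffeomorphism classes. Since $\dim N$ is fixed, any equifocal hypersurface $M \subset N$ has fixed dimension $n = \dim N - 1$, so it suffices to verify these three bounds uniformly over all curvature-adapted equifocal hypersurfaces in $N$, with constants $\Lambda, D, v$ depending only on $N$.

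The diameter bound $\mathrm{diam}(M) \leq \mathrm{diam}(N)$ is immediate. Via the Gauss equation, the bound on $|K_M|$ reduces to a uniform bound on the principal curvatures of $M$. Here the curvature-adapted condition is essential. Fix a parallel unit normal field $\xi$ on $M$. Along the normal geodesic $\gamma(t) = \exp_p(t\xi_p)$, curvature-adaptedness means $A_\xi$ and the normal Jacobi operator $R_\xi = R(\cdot,\xi)\xi$ share an eigenbasis with joint eigenvalues $(\lambda_i, \kappa_i)$; since $N$ is symmetric, $R_N$ is parallel, and this simultaneous diagonalization persists along $\gamma$. The Jacobi equation decouples into scalar ODEs
\[
f_i''(t) + \kappa_i\, f_i(t) = 0, \qquad f_i(0) = 1,\ f_i'(0) = -\lambda_i,
\]
whose first positive zero $t_i$ is the focal distance of $M$ in direction $e_i$. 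Equifocality forces each $t_i$ to be constant on $M$, and since $\lambda_i$ is an explicit function of $(\kappa_i, t_i)$ (e.g., $\lambda_i = \sqrt{\kappa_i}\cot(\sqrt{\kappa_i}\, t_i)$ when $\kappa_i > 0$), it too is constant.

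To turn this into a uniform bound on $|\lambda_i|$, I pass to a centered representative in the one-parameter family $M_s = \exp(s\xi)(M)$ of parallel hypersurfaces. For $s$ avoiding focal distances, $M_s$ is curvature-adapted equifocal and diffeomorphic to $M$, with focal distances shifted to $t_i - s$ and principal curvatures $\sqrt{\kappa_i}\cot(\sqrt{\kappa_i}(t_i - s))$. These are continuous in $s$ on each open interval between consecutive focal distances and diverge at the endpoints; choosing $s$ in the middle third of such an interval yields a representative whose $|\lambda_i|$ are controlled by the interval length. Since $\kappa_i$ is bounded by $\|R_N\|_\infty$, and the gaps between consecutive focal distances are controlled by the equifocal structure of $N$ (together with $g \leq n$), the resulting bound depends only on $N$.

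For the volume lower bound, the bounds on the second fundamental form of $M$ and on $|K_N|$ yield, via a Heintze--Karcher type estimate, a uniform lower bound on the focal radius of $M$ in $N$, hence on the injectivity radius $\mathrm{inj}(M)$; volume comparison then gives $\mathrm{vol}(M) \geq c > 0$. This step, combining extrinsic focal estimates with intrinsic injectivity and volume bounds into a single constant depending only on $N$, is where I expect the main technical difficulty to lie. With all three bounds verified, Cheeger's theorem completes the proof.
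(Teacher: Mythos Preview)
Your strategy parallels the paper's in its core move---replace $M$ by a centered parallel hypersurface and bound the second fundamental form uniformly---but diverges at the finiteness theorem invoked, and that divergence is precisely what creates the gap you yourself flag. The paper uses Corlette's finiteness theorem for immersions rather than Cheeger's: it requires $|\Pi|\leq B$ together with \emph{either} $\mathrm{diam}(M)\leq d$ \emph{or} $\mathrm{vol}(M)\leq v$---a volume \emph{upper} bound. After centering so that $d(M_\xi,M_\pm)\geq D/2$, integrating over the singular foliation $\{M_t\}$ gives $\mathrm{Vol}(N)\geq D\cdot\mathrm{Vol}(M_\xi)$, i.e.\ $\mathrm{Vol}(M_\xi)\leq\mathrm{Vol}(N)/D$; in fact the trivial bound $\mathrm{diam}(M)\leq\mathrm{diam}(N)$ would already suffice. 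Your route to a volume \emph{lower} bound via an intrinsic injectivity-radius estimate from extrinsic focal data is genuinely delicate (it would need something like a Fenchel-type inequality in $N$ to rule out short closed geodesics on $M$, together with Klingenberg), and the paper sidesteps it entirely.

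Two smaller points. For the second fundamental form bound the paper is also more direct: since $N$ has nonnegative sectional curvature, Gray's tube lemma $\kappa(v)\,e_c(x,v)\leq 1$ applies, and on an equifocal hypersurface $e_c(x,v)=d(M_\xi,M_\pm)\geq D/2$, so $|\lambda|\leq 2/D$ without ever writing the cotangent formula. And the uniform lower bound on the focal gap $d(M_+,M_-)$---which both approaches need---is not as light as ``$g\leq n$'': the paper obtains it by using the curvature-adapted hypothesis to factor the focal equation into scalar equations $d_i\cot(td_i)=\lambda_k$, bounding the number of focal points on any normal arc of length $\pi/\mathfrak{B}$ by $(n+2-r)n$. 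This is in fact the \emph{only} place the paper uses curvature-adaptedness. (Incidentally, your assertion that each $\lambda_i$ is constant on $M$ holds only in rank one, since the eigenvalues $\kappa_i$ of $R_\xi$ generally vary with the basepoint; your subsequent centering argument does not rely on this constancy, so the slip is harmless.)
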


 Furthermore, the condition curvature-adapted above can be dropped if the symmetric space is of rank one, \textit{i.e.},
\begin{thm}\label{th3}
Given a simply connected compact rank one symmetric space $N$, there
are only finitely many diffeomorphism classes of equifocal hypersurfaces in $N$.
\end{thm}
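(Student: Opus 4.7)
The plan is to reduce Theorem \ref{th3} to Theorem \ref{th1} via the classification of simply connected compact rank-one symmetric spaces, which up to isometry are $S^n$, $\mathbb{CP}^n$, $\mathbb{HP}^n$, and $\mathbb{OP}^2$. For the sphere $N = S^n$ the curvature tensor is $R(X,Y)Z = \langle Y,Z\rangle X - \langle X,Z\rangle Y$, so the normal Jacobi operator $R_\xi := R(\cdot,\xi)\xi$ is a scalar on each tangent space; consequently every hypersurface in $S^n$ is automatically curvature-adapted and Theorem \ref{th1} applies verbatim.

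For $N = \mathbb{CP}^n$ or $\mathbb{HP}^n$ I would invoke the generalized Hopf Riemannian submersion $\pi : \tilde N \to N$ (with $\tilde N$ one of $S^{2n+1}$, $S^{4n+3}$ and totally geodesic fiber $S^1$, $S^3$ respectively) and associate to an equifocal hypersurface $M \subset N$ its Hopf-invariant lift $\tilde M := \pi^{-1}(M) \subset \tilde N$. Because the fibers are totally geodesic, the principal curvatures of $\tilde M$ are precisely those of $M$ together with an additional zero eigenvalue of multiplicity equal to the fiber dimension, and parallel families on the two sides correspond under $\pi$; in particular $\tilde M$ is equifocal in the sphere $\tilde N$, hence curvature-adapted, so Theorem \ref{th1} already gives finitely many diffeomorphism types for $\tilde M$. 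To descend, I would work at the level of whole Cartan--M\"unzner isoparametric families: all parallel members of such a family are mutually isotopic via the gradient flow of the M\"unzner polynomial, so Hopf invariance is a family-wide property and each Hopf-invariant family in $\tilde N$ descends to a uniquely determined equifocal family in $N$. The lifting assignment from equifocal families in $N$ to Hopf-invariant isoparametric families in $\tilde N$ is injective, so the finiteness of the latter (which follows from Theorem \ref{th1} upstairs together with the boundedness of the Cartan--M\"unzner multiplicities on a fixed sphere) forces the finiteness of the former, and hence of the diffeomorphism types in $N$. The remaining case $N = \mathbb{OP}^2$ admits no sphere Hopf cover and must be treated separately; here I would appeal directly to the classification of cohomogeneity-one actions on the Cayley plane $F_4/\mathrm{Spin}(9)$, which produces an explicit finite list of possible equifocal hypersurfaces.

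The main obstacle is the descent in the middle paragraph: a priori the diffeomorphism type of the base of a principal $G$-bundle is not determined by that of the total space, so one cannot blindly pass from finiteness of $\tilde M$ to finiteness of $M$ at the level of individual hypersurfaces. The resolution is to organize the argument family by family — the rigidity of the isoparametric flow on the sphere combined with the essential uniqueness of the Hopf action ensures that Hopf invariance persists along the flow and that the quotient family is well defined, reducing the counting to that of Hopf-invariant isoparametric families in the sphere, which is finite.
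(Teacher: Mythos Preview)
Your sphere case is fine, but the descent argument for $\mathbb{CP}^n$ and $\mathbb{HP}^n$ has a genuine gap. Theorem~\ref{th1} gives finitely many \emph{diffeomorphism types} of isoparametric hypersurfaces in $S^{2n+1}$, not finitely many \emph{isoparametric families} (up to congruence). Your injectivity argument needs the latter: even with a single fixed Hopf fibration, one could in principle have infinitely many non-congruent Hopf-invariant isoparametric hypersurfaces $\tilde M_i\subset S^{2n+1}$, all mutually diffeomorphic, whose quotients $M_i=\tilde M_i/S^1$ are pairwise non-diffeomorphic --- because the diffeomorphisms among the $\tilde M_i$ need not be $S^1$-equivariant. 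Your appeal to ``essential uniqueness of the Hopf action'' and ``boundedness of the Cartan--M\"unzner multiplicities'' does not close this; the paper's Example~\ref{example41} and the remark following it were written precisely to show that the quotient type is \emph{not} determined by the type of the lift, and that one cannot deduce finiteness downstairs from finiteness upstairs via the Hopf fibration. Your treatment of $\mathbb{OP}^2$ has a separate gap: the cohomogeneity-one classification only covers \emph{homogeneous} equifocal hypersurfaces, and there is no a priori reason every equifocal hypersurface in the Cayley plane is homogeneous.

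The paper's proof avoids all of this by working intrinsically in $N$. The only place the curvature-adapted hypothesis enters the proof of Theorem~\ref{th1} is in obtaining the universal lower bound on $d(M_+,M_-)$ (Corollary~\ref{cor-lower-bound}); once that bound is in hand, the remainder of the argument (choosing a parallel leaf with controlled volume, bounding the shape operator via Lemma~\ref{L33}, and applying Corlette's Theorem~\ref{t32}) uses nothing about curvature-adaptedness. For rank-one $N$ the paper obtains the lower bound directly: normal geodesics are closed of length $2\pi$, so $d(M_+,M_-)=\pi/g$, and Thorbergsson's identity $g(m_1+m_2)=i+v$ (Remark~\ref{rem42}) bounds $g$ purely in terms of $\dim N$. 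This yields the required universal lower bound without any curvature-adapted assumption and without ever leaving $N$, after which the proof of Theorem~\ref{th1} goes through verbatim.
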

In section \ref{in rank one}, we will give some examples of equifocal hypersurfaces in $\mathbb{C}P^n$ which are the images of one isoparametric hypersurface in $S^{2n+1}$ under the Hopf fibrations with different $S^1$-actions. It turns out that these equifocal hypersurfaces are of different diffeomorphism types, which illustrates the non-triviality of Theorem \ref{th3} since now one can not expect to prove this finiteness result directly from that in spheres by using the Hopf fibrations.

\section{Focal structure of equifocal hypersurfaces}\label{focal-point}
\subsection{Preliminaries}
In this subsection, we firstly recall some fundamental definitions and results
of \cite{TT95}.

Let $M$ be an immersed submanifold in a symmetric space $N$. The
normal bundle $\nu(M)$ is called: (i) \emph{abelian} if $exp(\nu(M)_x)$ is contained in some flat of $N$ for
each $x\in M$; (ii) \emph{globally flat} if the induced normal connection is flat and has
trivial holonomy. The end point map $\eta : \nu(M)
\rightarrow N$ is the restriction of the exponential map $exp$ to
$\nu(M)$. Let $v$ be a (local) normal vector field on $M$.
Then the end point map of $v$ is the map $\eta_v:M\rightarrow N$ defined
by $x\mapsto exp_x(v(x))$. If $v\in\nu(M)_x$ is a singular point of $\eta$ and the dimension of the kernel of $d\eta_v$ is $m$, then $v$ is called a \emph{multiplicity $m$ focal normal} and $exp(v)$ is called a \emph{multiplicity $m$ focal point} of $M$ with respect to $M$ in $N$. The \emph{focal data}, $\Gamma(M)$, is defined to be the set of all pairs $(v,m)$ such that $v$ is a multiplicity $m$ focal normal of $M$. The focal variety $\mathcal {V}(M)$ is the set of all pairs $(\eta(v),m)$ with $(v,m)\in\Gamma(M)$. If $v$ is a parallel normal field on $M$, then
$M_v:=\eta_v(M)$ is called the \emph{parallel set} defined by $v$.
The equifocal submanifolds are defined as the following.
\begin{defn}(\cite{TT95})
A connected, compact, immersed submanifold $M$ in a symmetric space
$N$ is called equifocal if
\begin{itemize}
\item[(1)] $\nu(m)$ is globally flat and abelian, and
\item[(2)] if $v$ is a parallel normal field on $M$ such that $\eta_v(x_0)$ is a
multiplicity $k$ focal point of M with respect to $x_0$, then
$\eta_v(x)$ is a multiplicity $k$ focal point of M with respect to
$x$ for all $x\in M$.
\end{itemize}
\end{defn}

Throughout this paper, we assume that $N=G/K$ is a simply connected compact symmetric
space, $\mathfrak{g}=\mathfrak{k}+\mathfrak{p}$ is
its Cartan decomposition, and $N$ is equipped with the $G$-invariant
metric $g$ given by the restriction of the negative of the Killing form $\langle~,~\rangle$
of $\mathfrak{g}$ to $\mathfrak{p}$. Then we restate a part of Theorem 1.6 of
\cite{TT95} in the following which will play a crucial role in the proof of our theorems later.
\begin{thm}(\cite{TT95}) \label{th0}
Let $M$ be an immersed, compact, equifocal hypersurface in the
simply connected compact symmetric space $N$, and $v$ a unit normal
vector field. Then the following hold:
\begin{itemize}
\item[(a)] Normal geodesics are circles of constant length, which will be
denoted by $l$.
\item[(b)] There exist integers $m_1$, $m_2$, an even number $2g$ and
$0<\theta<\frac{l}{2g}$ such that the focal points on the normal
circle $T_x:=exp(\nu(M)_x)$ are
$$x(j)=exp\Big(\Big(\theta+\frac{(j-1)l}{2g}\Big)v(x)\Big),\quad 1\leq j \leq2g,$$ and
their multiplicities are $m_1$ if $j$ is odd and $m_2$ if $j$ is
even.
\item[(c)] $M$ is embedded. Let $M_t:=\eta_{tv}(M)=\{exp(tv(x))|x\in M\}$
denote the set parallel to $M$ at distance $t$. Then $M_t$ is an
equifocal hypersurface and $\eta_{tv}$ maps $M$ diffeomorphically
onto $M_t$ if $t\in(-\frac{l}{2g}+\theta, \theta)$.
\item[(d)] $M_+:=M_{\theta}$ and $M_-:=M_{-\frac{l}{2g}+\theta}$ are embedded
submanifolds of codimension $m_1+1$, $m_2+1$ in $N$, and the maps
$\eta_{\theta v}:M\rightarrow M_+$ and $\eta_{(-\frac{l}{2g}+\theta)
v}:M\rightarrow M_-$ are $S^{m_1}$- and $S^{m_2}$-bundles respectively.
\item[(e)] The focal variety $\mathcal {V}(M)=(M_{+},m_1)\cup(M_{-},m_2)$.
\item[(f)] $\{M_t\mid t\in [-\frac{l}{2g}+\theta, \theta]\}$ gives a singular foliation of $N$, which is analogous to the orbit foliation of a cohomogeneity one isometric group action on $N$.
\item[(g)] $N=D_1\cup D_2$ and $D_1\cap D_2=M$, where $D_1$ and $D_2$ are
diffeomorphic to the normal disk bundles of $M_+$ and $M_-$
respectively.
\end{itemize}
\end{thm}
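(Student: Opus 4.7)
The plan is to exploit the abelian globally flat normal bundle together with the restricted root decomposition of the symmetric space $N=G/K$, thereby reducing the focal analysis along each normal geodesic to scalar Jacobi equations indexed by restricted roots. Fix $x_0 \in M$ and identify $T_{x_0}N$ with $\mathfrak{p}$ in the usual way.

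For (a), the abelian condition forces $\exp(\nu(M)_{x_0})$ to lie in a flat of $N$, which in our simply connected compact setting is a torus; since $\nu(M)_{x_0}$ is one-dimensional, its exponential image is a one-parameter subgroup of this torus, hence a closed circle. Global flatness with trivial holonomy of $\nu(M)$ lets one parallel-translate a unit normal field $v$ across $M$, and equifocality together with connectedness of $M$ forces the circle length $l$ to be constant. For (b), I would analyse the Jacobi equation along $\gamma(t)=\exp(tv(x_0))$. Because $\gamma$ lives in a maximal torus, the Jacobi equation decouples into scalar ODEs of the form $\ddot J+\alpha(v(x_0))^2 J=0$, one for each restricted root $\alpha$, and the zeros of the relevant Jacobi fields occur at values $t$ with $\sin(\alpha(v(x_0))t+\phi_\alpha)=0$ for a phase $\phi_\alpha$ set by the shape operator of $M$ at $x_0$. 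The equifocal hypothesis forces these focal distances to coincide at every point of $M$, which aligns the phases into a single arithmetic progression on the circle; the antipodal symmetry $t\mapsto t+l/2$ then yields an even number $2g$ of focal points with alternating multiplicities $m_1$ and $m_2$.

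With (a)--(b) in hand, the remaining items should follow by standard tube constructions. For (c), on each open subinterval strictly between two consecutive focal distances the differential $d\eta_{tv}$ is everywhere injective, so $\eta_{tv}$ is an immersion, and the simply connected hypothesis on $N$ upgrades this to an embedding via a covering-space argument. For (d), at $t=\theta$ (respectively $t=-l/2g+\theta$) the kernel of $d\eta_{tv}$ has constant dimension $m_1$ (resp.\ $m_2$) at every point, giving smooth submanifolds $M_+$ and $M_-$ of codimension $m_1+1$ and $m_2+1$; the fibres of $\eta_{\pm v}$ are then round spheres whose dimensions are read off from the explicit Jacobi fields vanishing at the endpoints. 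Parts (e) and (f) are immediate consequences, and (g) comes from Morse theory applied to the distance function to $M_+$, whose only critical submanifolds are $M_\pm$.

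The main obstacle in this scheme is part (b): extracting the precise arithmetic progression with alternating multiplicities requires using the equifocal condition to synchronise the phases $\phi_\alpha$ across all roots simultaneously. I expect to invoke the Coxeter/Weyl-type reflection group on the normal circle developed in \cite{TT95}, which encodes the alternation $m_1,m_2,m_1,m_2,\dots$ as a reflection symmetry and thereby bypasses a root-by-root case analysis.
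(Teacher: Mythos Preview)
The paper does not prove this theorem at all: it is explicitly introduced as a restatement of ``a part of Theorem 1.6 of \cite{TT95}'' and is simply quoted, so there is no proof in the paper to compare your proposal against. What you have written is therefore a sketch of how one might reprove the Terng--Thorbergsson structure theorem, not a reconstruction of anything in the present paper.

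That said, your sketch has a genuine gap already in part (a). You write that because $\nu(M)_{x_0}$ is one-dimensional and its exponential lies in a flat torus, ``its exponential image is a one-parameter subgroup of this torus, hence a closed circle.'' This inference is false without further input: a one-parameter subgroup of a torus of rank $r\ge 2$ is closed only when the direction is rational, and is otherwise a dense winding. Nothing in the abelian or globally-flat hypotheses forces rationality. In \cite{TT95} the closedness of the normal geodesic is not obtained this way; it comes from lifting $M$ via the parallel transport map to an isoparametric submanifold $\widetilde M$ in the Hilbert space $H^0([0,1],\mathfrak{g})$ and invoking the affine Weyl group acting on the normal space of $\widetilde M$, which forces the focal hyperplanes to cut out a compact alcove and hence forces the normal circle to close. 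The same infinite-dimensional machinery is what actually produces the alternating multiplicities in (b), the embeddedness in (c), and the disk-bundle decomposition in (g); your ``covering-space argument'' for embeddedness in (c) and ``Morse theory on the distance to $M_+$'' for (g) are plausible heuristics but not how the original proof runs, and they would each require substantial work to make rigorous without the Hilbert-space lift.

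In short: there is nothing in this paper to match your proposal against, and if your goal is to give a self-contained finite-dimensional proof of the theorem, the first real obstacle is establishing that the normal geodesics close up, which your current argument does not do.
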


\subsection{Relation between focal points and shape operators}
In this subsection, we discuss the relation between focal points and shape operators
of submanifolds with abelian normal bundle in the simply connected compact symmetric space $N=G/K$.
In fact, this has been done in Section 3 of \cite{TT95}. For completeness, we repeat it as follows.

Let $G=Iso(N)$ and $M$ be a submanifold with abelian normal bundle in $N$.
Let $x_0 \in M$, $K=G_{x_0}$ and $\mathfrak{g}=\mathfrak{k}+\mathfrak{p}$ be the Cartan decomposition. Let $\pi:
G\rightarrow G/K=N$ be the canonical projection. For simplicity, we will denote by $\pi_*$ the restriction $\pi_*|_{\mathfrak{p}}$. Then
$\pi_*: \mathfrak{p}\rightarrow TN_{x_0}$ is an isomorphism, and
$TN_{x_0}$ is identified with $\mathfrak{p}$ by $\pi_*$. Moreover,
the curvature tensor of $N$ can be expressed as the following: $$R(\pi_*(X),
\pi_*(Y))\pi_*(Z)=\pi_*([[X,Y], Z]),\quad for~~ X, Y, Z\in \mathfrak{p}.$$

\begin{pro}\label{prop1}
With notations as above, we have $$R(v, TN_{x_0})v \subset
TM_{x_0},\quad for~~ v\in \nu(M)_{x_0}.$$
\end{pro}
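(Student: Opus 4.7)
The plan is to pass to the Lie algebra picture and reduce the claim to a pairing with the Killing form. Identify $TN_{x_0}$ with $\mathfrak{p}$ via $\pi_*$, and write $v=\pi_*V$, $w=\pi_*W$, with $V,W\in\mathfrak{p}$. By the stated curvature formula,
\[
R(v,w)v=\pi_*\bigl([[V,W],V]\bigr).
\]
To show this lies in $TM_{x_0}$, I will verify orthogonality to every normal vector $\xi=\pi_*\Xi\in\nu(M)_{x_0}$. Since the metric $g$ on $N$ is the push-forward of $-\langle\cdot,\cdot\rangle|_{\mathfrak{p}}$, the map $\pi_*$ is a linear isometry from $(\mathfrak{p},-\langle\cdot,\cdot\rangle)$ onto $TN_{x_0}$, giving
\[
g\bigl(R(v,w)v,\xi\bigr)=-\bigl\langle[[V,W],V],\Xi\bigr\rangle.
\]

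Next, I would apply the standard invariance identity for the Killing form, $\langle[A,B],C\rangle=\langle A,[B,C]\rangle$, which rewrites the right-hand side as $-\langle[V,W],[V,\Xi]\rangle$. At this point the hypothesis enters: the normal bundle of $M$ is \emph{abelian}, meaning $\exp(\nu(M)_{x_0})$ is contained in a flat of $N$. Translated to $\mathfrak{p}$, this says that $\nu(M)_{x_0}$ is an abelian subspace of $\mathfrak{p}$, so $[V,\Xi]=0$ for all $V,\Xi\in\nu(M)_{x_0}$. Consequently $g(R(v,w)v,\xi)=0$ for every $w\in TN_{x_0}$ and every $\xi\in\nu(M)_{x_0}$, so $R(v,TN_{x_0})v\perp\nu(M)_{x_0}$, i.e., $R(v,TN_{x_0})v\subset TM_{x_0}$.

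There is no real obstacle here: the argument is a two-line calculation once both ingredients (the symmetric-space curvature formula and the abelian normal bundle condition) are phrased algebraically. The only points requiring attention are the sign conventions — matching the $R(\pi_*X,\pi_*Y)\pi_*Z=\pi_*([[X,Y],Z])$ convention adopted in the excerpt and the fact that $g$ is the \emph{negative} of the Killing form on $\mathfrak{p}$ — and the verification that "$\exp(\nu(M)_{x_0})$ lies in a flat" is equivalent to $\nu(M)_{x_0}$ being abelian as a subspace of $\mathfrak{p}$, which is the standard correspondence between flats of $N$ and maximal abelian subspaces of $\mathfrak{p}$.
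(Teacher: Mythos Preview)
Your proof is correct and follows essentially the same route as the paper: both pass to $\mathfrak{p}$ via $\pi_*$, apply the curvature formula $R(\pi_*X,\pi_*Y)\pi_*Z=\pi_*([[X,Y],Z])$, use the ad-invariance of the Killing form to rewrite $\langle[[V,W],V],\Xi\rangle$ as $\langle[V,W],[V,\Xi]\rangle$, and then invoke the abelian normal bundle hypothesis to conclude $[V,\Xi]=0$. Your extra care with the sign of the Killing form is harmless here since the quantity being shown to vanish is unaffected by an overall sign.
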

\begin{proof}
For $w\in \nu(M)_{x_0}$,
\begin{eqnarray*}
g(R(v,TN_{x_0})v, w) &=&\langle[[\pi_*^{-1}v, \mathfrak{p}],
\pi_*^{-1}v], \pi_*^{-1}w
\rangle\\
&=& \langle[\pi_*^{-1}v, \mathfrak{p}],  [\pi_*^{-1}v,
\pi_*^{-1}w]\rangle\\
&=&0,
\end{eqnarray*}
where the last equality follows from the assumption that $\nu(M)$ is abelian.
\end{proof}

Next we recall an elementary fact concerning the tangential map $d\eta$ and Jacobi fields.
\begin{pro}\label{Jacobi}
Let $M$ be a submanifold of $N$, $x(s)$ a smooth curve in $M$,
$v(s)$ a normal field of $M$ along $x(s)$, and $\eta$
the end point map. Then $$d\eta_{v(0)}(v'(0))=J(1), $$ where $J$ is
the Jacobi field along $\gamma(t)=exp_{x(0)}(tv(0))$ satisfying the
initial condition $J(0)=x'(0)$ and
$J'(0)=-A_{v(0)}(x'(0))+\nabla^{\bot}_{x'(0)}v$, where $A_{v(0)}$
is the shape operator with respect to $v(0)$ and
$\nabla^{\bot}$ is the normal connection of the submanifold $M$.
\end{pro}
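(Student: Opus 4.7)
The plan is to build a two-parameter variation through geodesics whose variation field along $\gamma$ is precisely the Jacobi field $J$ we want, and then identify its initial conditions via the Weingarten formula.

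First, I would define the smooth map
\[
F : (-\varepsilon,\varepsilon)\times\R \longrightarrow N, \qquad F(s,t) := \exp_{x(s)}\bigl(t\,v(s)\bigr).
\]
For each fixed $s$, the $t$-curve $t\mapsto F(s,t)$ is a geodesic starting at $x(s)$ with initial velocity $v(s)$; in particular $F(0,t)=\gamma(t)$ and $F(s,1)=\eta(v(s))$. Since $F$ is a variation through geodesics, the variation field $J(t):=\partial_s F(0,t)$ is automatically a Jacobi field along $\gamma$. Moreover, by the definition of the differential of the endpoint map,
\[
d\eta_{v(0)}(v'(0))=\frac{d}{ds}\Big|_{s=0}\eta(v(s))=\partial_s F(0,1)=J(1),
\]
which gives the asserted identity once the initial conditions of $J$ are matched.

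Next I would compute $J(0)$ and $J'(0)$. Directly, $J(0)=\partial_s F(0,0)=x'(0)$. For the derivative, I would invoke the symmetry lemma (equality of mixed covariant derivatives, valid since the Levi-Civita connection is torsion free) to swap the order of differentiation:
\[
J'(0)=\frac{D}{\partial t}\Big|_{t=0}\partial_s F(0,0)=\frac{D}{\partial s}\Big|_{s=0}\partial_t F(0,0)=\frac{D}{\partial s}\Big|_{s=0} v(s).
\]

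The final step is to apply the Weingarten formula along the curve $x(s)\subset M$. Since $v(s)\in\nu(M)_{x(s)}$ and $x'(s)\in TM_{x(s)}$, the ambient covariant derivative decomposes into its tangential and normal parts as
\[
\frac{D}{\partial s} v(s)=-A_{v(s)}(x'(s))+\nabla^{\bot}_{x'(s)}v,
\]
and evaluation at $s=0$ yields $J'(0)=-A_{v(0)}(x'(0))+\nabla^{\bot}_{x'(0)}v$, completing the proof.

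There is really no serious obstacle here: the argument is essentially the standard derivation of the second variation setup. The only subtleties are sign-conventional (making sure the Weingarten formula is written with the sign that matches the paper's convention for the shape operator $A_v$) and the routine justification of swapping $D/\partial s$ and $D/\partial t$ on the variation $F$, both of which are standard.
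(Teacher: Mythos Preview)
Your argument is correct and is exactly the standard derivation one would expect; the paper itself does not supply a proof at all, merely introducing the proposition as ``an elementary fact concerning the tangential map $d\eta$ and Jacobi fields.'' So there is nothing to compare against, and your write-up could serve as the omitted justification without change.
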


Given $v\in\nu(M)_{x_0}$, put $a=\pi_{*}^{-1}(v)\in \mathfrak{p}$.
Let $J(t)$ be a Jacobi field along the normal geodesic $\gamma(t)=exp_{x_0}(tv)$ in $N$. Denote
the parallel transport map along $\gamma$ from $\gamma(t_1)$ to
$\gamma(t_2)$ by $P_{\gamma}(t_1, t_2)$. Set
$Y(t)=\pi_{*}^{-1}P_{\gamma}(t, 0)J(t)$. Then the Jacobi equation
for $J$ gives rise to the following equation for $Y$:
\begin{equation}\label{eqn-Jacobi}
Y''-ad(a)^2Y=0.
\end{equation}

Let $\mathfrak{a}$ be a maximal abelian subalgebra in $\mathfrak{p}$
containing $a$ and $$\mathfrak{p}= \mathfrak{a}\oplus \sum_{\alpha
\in \Delta} \mathfrak{p}_\alpha$$ its root space decomposition, where
$ad(a)^2(z_{\alpha})=-\alpha(a)^2z_{\alpha}$ for $z_{\alpha}\in
\mathfrak{p}_\alpha$. By direct computation, we know that the
solution of the equation (\ref{eqn-Jacobi}) with the initial
conditions $Y(0)=p_0+\sum_{\alpha}p_{\alpha}$ and
$Y'(0)=q_0+\sum_{\alpha}q_{\alpha}$ is
$$Y(t)=p_0+tq_0+\sum_{\alpha}p_{\alpha}\cos{(\alpha(a)t)}+\sum_{\alpha}q_{\alpha}\frac{1}{\alpha(a)}\sin{(\alpha(a)t)},$$
where $p_0, q_0\in \mathfrak {a}$, $p_{\alpha}, q_{\alpha}\in
\mathfrak {p}_\alpha$ and $\lambda^{-1}\sin{\lambda}$ is defined to
be 1 if $\lambda=0$. For convenience, let $D_1(a)$ and $D_2(a)$ be
the operators defined as follows: for $z=p_0+\sum_{\alpha}p_\alpha\in\mathfrak {p}$ with $p_0\in \mathfrak {a}$ and
$p_{\alpha}\in \mathfrak {p}_\alpha$,
\begin{eqnarray*}
D_1(a)(z)&=&p_0 +\sum_{\alpha}p_{\alpha}\cos{(\alpha(a)t)},\\
D_2(a)(z)&=&p_0
+\sum_{\alpha}p_{\alpha}\frac{1}{\alpha(a)}\sin{(\alpha(a)t)}.
\end{eqnarray*}
Put
$R_a(z):=\pi_*^{-1}R(\pi_*(a),\pi_*(z))\pi_*(a)=-ad(a)^2(z)$. Then we have
$R_a\geq0$,
$D_1(a)=\cos(\sqrt{R_a})$ and
$D_2(a)=(\sqrt{R_a})^{-1}\sin(\sqrt{R_a})$, which imply that $D_1$
and $D_2$ depend only on $a$, but not on the choice of the maximal
abelian subalgebra $\mathfrak {a}$.
\begin{lem}\label{parallel} Let $x(s)$ be a curve in M,
$x(0)=x_0$ and $v(s)$ is a parallel normal field along $x(s)$ with
$v(0)=\pi_*a$ for some $a\in \mathfrak {p}$. Then
$$\pi_*^{-1}P_{\gamma}(1,0)d\eta_{v(0)}(v'(0))=\{D_1(a)-D_2(a)\pi_*^{-1}A_{v(0)}\pi_*\}(\pi_*^{-1}(x'(0))),$$
where $P_{\gamma}(1,0)$ is the parallel transport map along
$\gamma(t)=exp_{x_0}(tv(0))$ from $\gamma(1)$ to $\gamma(0)$.
\end{lem}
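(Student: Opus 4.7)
The plan is to combine Proposition \ref{Jacobi} with the explicit solution of the Jacobi equation \eqref{eqn-Jacobi} derived in the paragraph preceding the lemma, and then translate everything to $\mathfrak{p}$ via the identification $\pi_*$.

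First I would apply Proposition \ref{Jacobi} to the curve $x(s)$ and the parallel normal field $v(s)$. Since $v$ is parallel, $\nabla^{\bot}_{x'(0)}v=0$, so the initial conditions of the Jacobi field $J$ along $\gamma(t)=\exp_{x_0}(tv(0))$ reduce to $J(0)=x'(0)$ and $J'(0)=-A_{v(0)}(x'(0))$, and Proposition \ref{Jacobi} gives $d\eta_{v(0)}(v'(0))=J(1)$.

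Next I would transport the problem to $\mathfrak{p}$. Setting $Y(t)=\pi_*^{-1}P_{\gamma}(t,0)J(t)$, the initial conditions become
\begin{equation*}
Y(0)=\pi_*^{-1}(x'(0)),\qquad Y'(0)=-\pi_*^{-1}A_{v(0)}\pi_*\bigl(\pi_*^{-1}(x'(0))\bigr),
\end{equation*}
using that $P_{\gamma}(0,0)=\Id$ and that $\pi_*$ intertwines parallel transport along $\gamma$ with the identity on $\mathfrak{p}$ in this setup. By the discussion before the lemma, $Y$ satisfies the ODE $Y''-\ad(a)^2 Y=0$, whose solution with respect to the root space decomposition of $\mathfrak{p}$ with respect to a maximal abelian subalgebra $\mathfrak{a}\ni a$ is given by the formula
\begin{equation*}
Y(t)=p_0+tq_0+\sum_{\alpha}p_{\alpha}\cos(\alpha(a)t)+\sum_{\alpha}q_{\alpha}\tfrac{1}{\alpha(a)}\sin(\alpha(a)t),
\end{equation*}
where $p_0+\sum_{\alpha}p_{\alpha}=Y(0)$ and $q_0+\sum_{\alpha}q_{\alpha}=Y'(0)$ are the root space decompositions.

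Finally, evaluating at $t=1$ and reading off the definitions of $D_1(a)$ and $D_2(a)$ term by term, one obtains
\begin{equation*}
Y(1)=D_1(a)(Y(0))+D_2(a)(Y'(0))=\bigl\{D_1(a)-D_2(a)\,\pi_*^{-1}A_{v(0)}\pi_*\bigr\}\bigl(\pi_*^{-1}(x'(0))\bigr),
\end{equation*}
which is precisely the asserted identity since $Y(1)=\pi_*^{-1}P_{\gamma}(1,0)J(1)=\pi_*^{-1}P_{\gamma}(1,0)d\eta_{v(0)}(v'(0))$. The only mild subtlety is justifying that $D_1(a)$ and $D_2(a)$, as applied to arbitrary elements of $\mathfrak{p}$ at $t=1$, really encode the operators $\cos(\sqrt{R_a})$ and $(\sqrt{R_a})^{-1}\sin(\sqrt{R_a})$ independent of the choice of $\mathfrak{a}$; this is already noted in the text, so no further obstacle remains. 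The main bookkeeping step—and the only place one can slip—is to keep the identifications $\pi_*$ and $P_{\gamma}(1,0)$ consistent on both sides of the final equation.
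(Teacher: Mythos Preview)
Your proof is correct and follows essentially the same approach as the paper: invoke Proposition~\ref{Jacobi} with the parallel condition to obtain $J(0)=x'(0)$, $J'(0)=-A_{v(0)}(x'(0))$, transport $J$ back to $\mathfrak{p}$ via $Y(t)=\pi_*^{-1}P_{\gamma}(t,0)J(t)$, and evaluate the explicit solution of \eqref{eqn-Jacobi} at $t=1$ using the definitions of $D_1(a)$ and $D_2(a)$. The paper's version is slightly terser (it introduces the variation $V(s,t)$ explicitly before citing Proposition~\ref{Jacobi}), but the logical content is identical.
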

\begin{proof}
Let $V(s,t)=exp_{x(s)}(tv(s))$ be a variation of normal geodesics of
$M$, and $T=\frac{\partial V}{\partial s}$, $S=\frac{\partial
V}{\partial t}$. Then $S(s,0)=v(s)$, and $J(t)=T(0,t)$ is a Jacobi
field along the geodesic
$\gamma(t)=exp_{x(0)}(tv(0))=\pi(e^{ta})$ with $J(0)=x'(0)$. By
Proposition \ref{Jacobi}, we have $d\eta_{v(0)}(v'(0))=J(1)$ and
$J'(0)=-A_{v(0)}(x'(0))$ since $v(s)$ is parallel. As above, set
$Y(t)=\pi_{*}^{-1}P_{\gamma}(t, 0)J(t)$. Clearly,
$Y(0)=\pi_*^{-1}(x'(0))$ and
$Y'(0)=\pi_*^{-1}(J'(0))=-\pi_*^{-1}A_{v(0)}(x'(0))$. Then
$$\pi_*^{-1}P_{\gamma}(1,0)d\eta_{v(0)}(v'(0))=Y(1)=\{D_1(a)-D_2(a)\pi_*^{-1}A_{v(0)}\pi_*\}(\pi_*^{-1}(x'(0)))$$
by the definitions of $D_1(a)$ and $D_2(a)$.
\end{proof}

Now we can prove the following theorem of \cite{TT95} where the case of $N=G$ is explicitly presented and the general case has been abbreviated.
\begin{thm}\label{Th2.2}
Suppose M is a submanifold in N with abelian normal bundle and $a\in
\pi_*^{-1}\nu(M)_{x_0}\subset \mathfrak {p}$, $v=\pi_*(a)$. Then
\begin{itemize}
\item[(1)] the operator $(D_1(a)-D_2(a)\pi_*^{-1}A_{v}\pi_*)$ maps
$\pi_*^{-1}TM_{x_0}$ to itself.
\item[(2)] $\pi(e^a)$ is a focal point of $M$ of multiplicity $m$ with
respect to $x_0$ if and only if the operator
$(D_1(a)-D_2(a)\pi_*^{-1}A_{v}\pi_*)$ on $\pi_*^{-1}TM_{x_0}$
is singular with nullity $m$.
\end{itemize}
\end{thm}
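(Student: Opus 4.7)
The plan is to prove (1) from the abelianness of the normal bundle together with Proposition \ref{prop1}, and then deduce (2) by analyzing $d\eta_v$ separately on horizontal and vertical directions in $T(\nu(M))_v$, using Lemma \ref{parallel} for the former and the Jacobi-field formula preceding it for the latter. For part (1), the operator $R_a=-\mathrm{ad}(a)^2$ has two key invariance properties: by Proposition \ref{prop1}, $R_a$ sends all of $\mathfrak{p}$ into $\pi_*^{-1}(TM_{x_0})$, and in particular preserves $\pi_*^{-1}(TM_{x_0})$; by abelianness, $[a,b]=0$ for every $b\in\pi_*^{-1}(\nu(M)_{x_0})$, so $R_a$ vanishes on $\pi_*^{-1}(\nu(M)_{x_0})$. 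Since $D_1(a)=\cos(\sqrt{R_a})$ and $D_2(a)=(\sqrt{R_a})^{-1}\sin(\sqrt{R_a})$ are power series in $R_a$ whose non-identity parts are built from positive powers of $R_a$, both preserve $\pi_*^{-1}(TM_{x_0})$. Combined with the fact that $\pi_*^{-1}A_v\pi_*$ is by construction a self-map of $\pi_*^{-1}(TM_{x_0})$, this yields (1).

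For part (2), decompose $T(\nu(M))_v=H\oplus V$ into horizontal and vertical parts with respect to the normal connection, where $H$ is identified with $TM_{x_0}$ through parallel lifts of curves in $M$ and $V=\nu(M)_{x_0}$ is the tangent space to the fiber. On $H$, Lemma \ref{parallel} identifies $\pi_*^{-1}P_\gamma(1,0)\,d\eta_v$ with $(D_1(a)-D_2(a)\pi_*^{-1}A_v\pi_*)$ acting on $\pi_*^{-1}(TM_{x_0})$; by (1) this operator preserves $\pi_*^{-1}(TM_{x_0})$, so $d\eta_v|_H$ has image in $P_\gamma(0,1)(TM_{x_0})$. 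On $V$, the restriction $\eta|_{\nu(M)_{x_0}}$ coincides with $\exp_{x_0}$, so for $w\in V$ one has $d\eta_v(w)=J(1)$ where $Y(t)=\pi_*^{-1}P_\gamma(t,0)J(t)$ solves the Jacobi equation with $Y(0)=0$ and $Y'(0)=\pi_*^{-1}(w)$. Abelianness forces $\pi_*^{-1}(w)$ to commute with $a$ and hence to lie in $\mathfrak{a}\oplus\bigoplus_{\alpha(a)=0}\mathfrak{p}_\alpha$, so the closed-form Jacobi solution collapses to $Y(t)=t\,\pi_*^{-1}(w)$, giving $Y(1)=\pi_*^{-1}(w)$. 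Thus $d\eta_v|_V$ is injective with image in $P_\gamma(0,1)(\nu(M)_{x_0})$.

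Because $TM_{x_0}$ and $\nu(M)_{x_0}$ are orthogonal complements in $TN_{x_0}$ and parallel transport along $\gamma$ preserves this orthogonality, the images of $d\eta_v|_H$ and $d\eta_v|_V$ lie in complementary subspaces of $T_{\exp(v)}N$. Therefore $\ker d\eta_v=\ker(d\eta_v|_H)\oplus\ker(d\eta_v|_V)=\ker(d\eta_v|_H)$, whose dimension equals the nullity of $(D_1(a)-D_2(a)\pi_*^{-1}A_v\pi_*)$ on $\pi_*^{-1}(TM_{x_0})$, establishing (2). The main delicate step is the vertical calculation: without abelianness, root-space components of $\pi_*^{-1}(w)$ with $\alpha(a)\in\pi\mathbb{Z}\setminus\{0\}$ would contribute nontrivially to $\ker d\eta_v|_V$ and spoil the clean identification of $\ker d\eta_v$ with the horizontal nullspace.
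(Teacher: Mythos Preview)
Your proof is correct and follows the same overall strategy as the paper: split $T(\nu(M))_v$ into horizontal and vertical pieces, handle the horizontal part via Lemma~\ref{parallel}, and show the vertical part contributes nothing to the kernel. The only substantive difference is in the vertical step. The paper exploits the group structure directly: since $[a,b]=0$ one has $e^{a+tb}=e^a e^{tb}$, so $d\eta(\sigma'(0))=(e^a)_*\pi_*(b)\neq 0$, where $(e^a)_*$ is the differential of the isometry $e^a$ (which, being the transvection along $\gamma$, coincides with $P_\gamma(0,1)$). You instead run the Jacobi-field formula with $Y(0)=0$, $Y'(0)=\pi_*^{-1}(w)$ and use abelianness to force $\pi_*^{-1}(w)$ into the $0$-eigenspace of $R_a$, collapsing the solution to $Y(t)=t\,\pi_*^{-1}(w)$. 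Both routes give the same answer $d\eta_v(w)=P_\gamma(0,1)(w)$; the paper's is slicker, while yours stays entirely within the Jacobi machinery already set up and makes the block-triangular structure of $d\eta_v$ (horizontal image in $P_\gamma(0,1)(TM_{x_0})$, vertical image in $P_\gamma(0,1)(\nu(M)_{x_0})$) explicit, which the paper leaves implicit in the phrase ``Now the theorem follows from Lemma~\ref{parallel}.''
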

\begin{proof}
Since part (1) follows straightforward from Proposition \ref{prop1}, it suffices to prove part (2). For the tangent space  $T\nu(M)_{(x_0,v)}$, we can choose
a natural basis which consists of vectors of the form $v'(0)$ as in
Lemma \ref{parallel} and $\sigma'(0)$ with
$\sigma(t)=exp_{x_0}(\pi_*(a+tb))=\eta(\pi_*(a+tb))$, $b\in
\pi_*^{-1}\nu(M)_{x_0}$. Since $\nu(M)$ is abelian, we have
$$d\eta(\sigma'(0))=\frac{d}{dt}|_{t=0}\pi(e^{a+tb})=\frac{d}{dt}|_{t=0}\pi(e^ae^{tb})=e^a_*(\pi_*(b))\neq0,$$
where $e^a_*$ denotes the tangential map of the $G$-action $e^a: N\rightarrow N$.
Now the theorem follows from Lemma \ref{parallel}.
\end{proof}

\subsection{Hypersurfaces in spheres}
In this subsection, to warm up we apply Theorem \ref{Th2.2} to investigate the focal structure of hypersurfaces in spheres. For simplicity, henceforth we identify $\mathfrak {p}$ with
$T_{x_0}N$ without referring to $\pi_*$.

Let $M^n$ be a hypersurface in the sphere $S^{n+1}$, $G=Iso(S^{n+1})=O(n+2)$. Note that the
normal bundle $\nu(M)$ is $1$-dimensional and thus abelian. Given $x_0 \in M,$ let
$K=G_{x_0}\cong O(n+1)$, $\mathfrak {g}=\mathfrak {k}+\mathfrak {p}$ be the Cartan
decomposition, and $a\in\mathfrak {p}$ be a unit vector normal to $M$ at
$x_0$. Then $a^{\bot}=TM_{x_0}$ is the tangent space of $M$ at $x_0$. Since $S^{n+1}$ has constant sectional curvature $1$, we have $R_a|_{a^\bot}=id$,
\begin{eqnarray}
&&D_1(ta)|_{a^\bot}=\cos(t\sqrt{R_a})|_{a^\bot}=\cos t~id,\nonumber\\
&&D_2(ta)|_{a^\bot}=(t\sqrt{R_a})^{-1}\sin(t\sqrt{R_a})|_{a^\bot}=\frac{\sin
t}{t}id.\nonumber
\end{eqnarray} Applying Theorem \ref{Th2.2}, we get the following proposition
immediately.
\begin{pro} \label{pro1}
With notations as above and for $t\in\mathbb{R}$,
$\exp_{x_0}(ta)$ is a focal point of $M$ in $S^{n+1}$ with respect to $x_0$ if and only
if $$\det(\cot t~ id-A_a)=0,$$
where $A_a$ is the shape operator of $M$ with respect to $a\in\nu(M)_{x_0}\subset\mathfrak {p}$.
\end{pro}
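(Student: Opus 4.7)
The proof is a direct specialization of Theorem \ref{Th2.2} to the constant-curvature setting, so the plan is simply to carry out that specialization cleanly.

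First I would invoke Theorem \ref{Th2.2}(2) with the normal vector $ta$ (rather than $a$) to characterize focal points along the geodesic $\exp_{x_0}(ta)$. Since this geodesic is normal to $M$ at $x_0$ and $\nu(M)$ is $1$-dimensional (hence automatically abelian), the hypotheses are satisfied. The theorem tells us that $\exp_{x_0}(ta)$ is a focal point (of multiplicity $m$) if and only if the operator
\begin{equation*}
D_1(ta)-D_2(ta)\,A_{\pi_{*}(ta)}
\end{equation*}
on $TM_{x_0}$ is singular (with nullity $m$).

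Next I would simplify each factor. On $TM_{x_0}=a^{\bot}$, the text already records
\begin{equation*}
D_1(ta)|_{a^{\bot}}=\cos t\;\mathrm{id},\qquad D_2(ta)|_{a^{\bot}}=\frac{\sin t}{t}\,\mathrm{id},
\end{equation*}
which comes from the constant curvature identity $R_a|_{a^{\bot}}=\mathrm{id}$. Meanwhile the shape operator is linear in the normal direction, so $A_{\pi_{*}(ta)}=A_{tv}=tA_{a}$. Substituting these into the operator above gives
\begin{equation*}
\cos t\;\mathrm{id}-\sin t\;A_{a}.
\end{equation*}
Thus $\exp_{x_0}(ta)$ is a focal point of $M$ with respect to $x_0$ if and only if $\det(\cos t\;\mathrm{id}-\sin t\;A_{a})=0$.

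Finally I would dispense with the trivial boundary cases. If $\sin t\neq 0$, dividing by $\sin t$ converts the determinantal condition into $\det(\cot t\;\mathrm{id}-A_{a})=0$, which is the desired form. If $\sin t=0$, then $\cos t=\pm 1$, the operator reduces to $\pm\mathrm{id}$ which is non-singular, and $\exp_{x_0}(ta)=\pm x_0$ is not a focal point; in this case $\cot t$ is undefined, so the stated equivalence is to be read with the tacit understanding $\sin t\neq 0$. No real obstacle arises: the entire argument is a one-line application of Theorem \ref{Th2.2} combined with the explicit spherical formulas for $D_1,D_2$, the only care being the bookkeeping $A_{tv}=tA_{v}$ so that the two factors of $t$ cancel correctly.
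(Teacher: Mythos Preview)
Your argument is correct and is exactly the paper's approach: the paper derives the formulas $D_1(ta)|_{a^\bot}=\cos t\,\mathrm{id}$ and $D_2(ta)|_{a^\bot}=\frac{\sin t}{t}\,\mathrm{id}$ from $R_a|_{a^\bot}=\mathrm{id}$ and then states that Proposition~\ref{pro1} follows immediately from Theorem~\ref{Th2.2}. Your write-up just makes explicit the bookkeeping $A_{ta}=tA_a$ and the trivial case $\sin t=0$, which the paper leaves implicit.
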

\begin{rem}\label{re1}
Denote by $L$ the number of focal points along the normal geodesic $\exp_{x_0}(ta)$, $t\in [0,\pi)$, of $M$ with respect to $x_0$. The proposition above implies
$$L=\sharp \{t\in [0,\pi)\mid\det(\cot
t~ id-A_a)=0\}\leq n,$$which is the Theorem 1 of \cite{Pin85}. Combining this with Theorem \ref{th0} shows that, for any given equifocal (isoparametric)
hypersurface $M$ in $S^{n+1}$, the distance between the two focal submanifolds satisfies $d(M_+,M_-)\geq\frac{\pi}{n+1}$, which says that
$d(M_+,M_-)$ has a lower bound that depends only on $S^{n+1}$. Such type
fact is crucial for our proof of the finiteness theorem later.
\end{rem}

\subsection{Hypersurfaces in simply connected compact symmetric spaces}
In this subsection, we will firstly apply Theorem \ref{Th2.2} to investigate the focal structure of hypersurfaces in the simply connected compact symmetric space $N$. From this focal structure we derive some corollaries when the hypersurface is curvature-adapted, or in addition it is equifocal.

Let $N^{n+1}$ be a simply connected compact symmetric space of dimension $n+1$ and rank $r$, $G=Iso(N)$, and $M^n$ be a hypersurface in $N$. Observe that
the normal bundle of a hypersurface is always abelian. Let $x_0\in
M$, $K=G_{x_0}$, $\mathfrak {g}=\mathfrak {k}+\mathfrak {p}$ be the Cartan
decomposition, and $a\in\mathfrak {p}$ be a unit vector normal to $M$ at
$x_0$. Let $\mathfrak {a}$ be a maximal abelian subalgebra in $\mathfrak {p}$
containing $a$, and $$\mathfrak {p}= \mathfrak {a}\oplus \sum_{\alpha
\in \Delta} \mathfrak {p}_\alpha$$ its root space decomposition. One can choose a basis for each $\mathfrak {p}_\alpha$ and $\mathfrak {a}$ so as to constitute a basis of $\mathfrak {p}=TN_{x_0}$ including $a$. Then since $R_a=-ad(a)^2$, under this basis we can diagonalize the operator $\sqrt{R_a}$ on $a^{\bot}=TM_{x_0}$ as
$$\sqrt{R_a}|_{a^\bot}=diag(\overbrace{d_1,...,
d_1}^{m_1},...,\overbrace{d_s,..., d_s}^{m_s},
\overbrace{0,...0}^{m_{s+1}})=diag(d_1I_{m_1},...,d_sI_{m_s}, 0_{m_{s+1}}),$$ where $d_i=|\alpha(a)|\geq0$ and $m_i=dim(\mathfrak {p}_\alpha)$ for some
$\alpha\in\Delta$, $i=1,...,s$, $s\leq n+1-r$ and $m_{s+1}=
r-1$. It is well known that the numbers $d_i$ usually depend on the choice of $x_0\in
M$ except for the case when $N$ has rank $r=1$. Set $\mathfrak {B}:=\sup\{|\alpha(b)|\mid\alpha\in\Delta,
b\in\mathfrak {a}, |b|=1\}$. Then $d_i\leq\mathfrak {B}$. Notice that $\mathfrak {B}$ is a finite positive number depending only on $N$, but not on the hypersurface $M$.

Applying Theorem \ref{Th2.2} will then derive the following
\begin{pro}\label{Pro24}
With notations as above and for $t\in\mathbb{R}$, $\exp_{x_0}(ta)$ is a focal point of $M^n$ in $N^{n+1}$ with respect to
$x_0$ if and only if
\begin{equation}\label{focal-formula}
\det(\left(
\begin{array}{cccc}
d_1\cot(td_1)I_{m_1}&&&\\

&\ddots&&\\
&&d_s\cot(td_s)I_{m_s}&\\
&&&\frac{1}{t}I_{m_{s+1}}
\end{array}
\right)-A_a)=0,
\end{equation}
where $A_a$ is the shape operator of $M$ with respect to $a$ and $d_i\cot(td_i)=\frac{1}{t}$ if $d_i=0$.
\end{pro}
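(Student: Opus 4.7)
The plan is to apply Theorem \ref{Th2.2} to the rescaled normal vector $ta$ and then express the resulting singularity condition as a matrix identity in the basis of $\mathfrak{p}$ adapted to the root space decomposition chosen just before the statement. The first step is to invoke Theorem \ref{Th2.2} with $ta$ in place of $a$: the point $\exp_{x_0}(ta) = \pi(e^{ta})$ is a focal point of $M$ at $x_0$ with multiplicity $m$ if and only if $D_1(ta) - D_2(ta) A_{\pi_*(ta)}$ on $a^{\bot} = TM_{x_0}$ has nullity $m$. Since the shape operator is linear in its normal argument, $A_{\pi_*(ta)} = tA_a$, so the relevant operator is $D_1(ta) - tD_2(ta)A_a$.

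The second step is to compute $D_1(ta)$ and $tD_2(ta)$ on $a^{\bot}$ explicitly. Using $R_{ta} = -\operatorname{ad}(ta)^2 = t^2 R_a$, together with the identifications $D_1(a) = \cos(\sqrt{R_a})$ and $D_2(a) = (\sqrt{R_a})^{-1}\sin(\sqrt{R_a})$ already recorded in the text, one finds in the chosen basis
\begin{align*}
D_1(ta)|_{a^{\bot}} &= \operatorname{diag}\bigl(\cos(td_1)I_{m_1},\ldots,\cos(td_s)I_{m_s},I_{m_{s+1}}\bigr),\\
tD_2(ta)|_{a^{\bot}} &= \operatorname{diag}\Bigl(\tfrac{\sin(td_1)}{d_1}I_{m_1},\ldots,\tfrac{\sin(td_s)}{d_s}I_{m_s},\,t I_{m_{s+1}}\Bigr).
\end{align*}
For $t$ at which $tD_2(ta)$ is invertible, I would then factor
\[
D_1(ta) - tD_2(ta)A_a \;=\; tD_2(ta)\cdot (B_t - A_a),
\]
where $B_t$ is precisely the block-diagonal matrix appearing in (\ref{focal-formula}): on the $\mathfrak{p}_{\alpha_i}$-block, $(tD_2(ta))^{-1}D_1(ta) = \frac{d_i}{\sin(td_i)}\cos(td_i) I_{m_i} = d_i\cot(td_i)I_{m_i}$, and on the $\mathfrak{a}$-block it is $\frac{1}{t}I_{m_{s+1}}$. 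Since $tD_2(ta)$ is invertible, the singularity of the left hand side is equivalent to $\det(B_t - A_a) = 0$, and the multiplicity statement follows as well because $tD_2(ta)$ is a bijection.

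The one technical obstacle is the exceptional locus where $\sin(td_i) = 0$ for some $d_i > 0$, on which $d_i\cot(td_i)$ is formally infinite and the factorization breaks down. I would dispose of these values by clearing denominators: multiplying $\det(B_t - A_a)$ through by $\prod_i (\sin(td_i)/d_i)^{m_i}\cdot t^{m_{s+1}}$ produces a polynomial expression in $\sin(td_i)$, $\cos(td_i)$ and the entries of $A_a$ which agrees with $\det(D_1(ta) - tD_2(ta)A_a)$ up to a nonvanishing constant, so that the equivalence of the two vanishing conditions extends by continuity to all $t \in \R$, with the convention $d_i\cot(td_i) = 1/t$ at $d_i = 0$ being exactly the appropriate limit. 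This bookkeeping is the only nontrivial point, and it matches the analogous handling in Proposition \ref{pro1} for the spherical case.
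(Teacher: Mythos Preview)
Your proof is correct and follows essentially the same route as the paper's: invoke Theorem \ref{Th2.2} for $ta$, compute $D_1(ta)$ and $tD_2(ta)$ in the root-space basis, and factor out the diagonal operator $tD_2(ta)$ to obtain the stated determinant condition. The paper records this as a single determinant identity and does not discuss the exceptional values where $\sin(td_i)=0$, so your clearing-denominators remark makes explicit a point the paper leaves implicit.
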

\begin{proof}
It follows from the discussions above that under some basis of $TM_{x_0}$, we have
\begin{eqnarray}
&&\sqrt{R_a}|_{a^\bot}=diag(d_1I_{m_1},...,d_sI_{m_s}, 0_{m_{s+1}}),\nonumber\\
&&D_1(ta)|_{a^\bot}=\cos(t\sqrt{R_a})|_{a^\bot}=diag(\cos(td_1)I_{m_1},...,\cos(td_s)I_{m_s},I_{m_{s+1}}),\nonumber\\
&&D_2(ta)|_{a^\bot}=(t\sqrt{R_a})^{-1}\sin(t\sqrt{R_a})|_{a^\bot}=diag\Big(\frac{1}{td_1}\sin(td_1)I_{m_1},...,\frac{1}{td_s}\sin(td_s)I_{m_s},I_{m_{s+1}}\Big).\nonumber
\end{eqnarray}
Note that $A_{ta}=tA_{a}$ and thus
$$\det(D_1(ta)-D_2(ta)A_{ta})=\prod_{i=1}^s\frac{\sin(td_i)}{d_i}\det\Big(diag(d_1\cot(td_1)I_{m_1},...,d_s\cot(td_s)I_{m_s},\frac{1}{t}I_{m_{s+1}})-A_a\Big),$$
which immediately implies the conclusion by Theorem \ref{Th2.2}.
\end{proof}

Recall that a hypersurface $M$ is called \emph{curvature-adapted} if its shape operator $A_a$ commutes with the normal Jacobi operator $R_a|_{TM}$ for $a\in\nu(M)$, or equivalently, they are simultaneously diagonalizable. Then it follows from the proposition above the following corollary which can be regarded as a generalization of the theorem of Pinkall \cite{Pin85} in the spherical
case to more general ambient spaces (see Remark \ref{re1}).
\begin{cor} \label{c25}
With notations as above, suppose that $M^n$ is a curvature-adapted hypersurface and denote by $L$ the number of focal points along the normal geodesic $\exp_{x_0}(ta)$, $t\in [0,\frac{\pi}{\mathfrak{B}})$, of $M$ with respect to $x_0$, then
$$L=\sharp \{t\in [0,\frac{\pi}{\mathfrak
{B}})\mid\det\Big(diag(d_1\cot(td_1)I_{m_1},...,d_s\cot(td_s)I_{m_s},\frac{1}{t}I_{m_{s+1}})-A_a\Big)=0\}\leq (s+1)n,$$
where $s\leq n+1-r$.
\end{cor}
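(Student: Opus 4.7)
The plan is to use curvature-adaptedness to factor the determinant in Proposition \ref{Pro24} into independent scalar equations, and then count the roots of each such equation on $[0,\pi/\mathfrak{B})$ via the monotonicity of cotangent on the first half of its period.

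First, since $[A_a,R_a|_{TM_{x_0}}]=0$, the shape operator $A_a$ preserves every eigenspace of $\sqrt{R_a}|_{TM_{x_0}}$. Writing $TM_{x_0}=V_1\oplus\cdots\oplus V_s\oplus V_{s+1}$, where $V_i$ is the $d_i$-eigenspace of $\sqrt{R_a}$ for $1\le i\le s$ and $V_{s+1}$ is its kernel, I pick a simultaneous orthonormal eigenbasis in which $A_a|_{V_i}$ is diagonal with eigenvalues $\lambda_{i,1},\dots,\lambda_{i,m_i}$, and $A_a|_{V_{s+1}}$ is diagonal with eigenvalues $\mu_1,\dots,\mu_{m_{s+1}}$. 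In this basis the matrix appearing in (\ref{focal-formula}) is itself diagonal, so its determinant factors as
$$\prod_{i=1}^{s}\prod_{j=1}^{m_i}\bigl(d_i\cot(td_i)-\lambda_{i,j}\bigr)\cdot\prod_{k=1}^{m_{s+1}}\Bigl(\frac{1}{t}-\mu_k\Bigr).$$
By Proposition \ref{Pro24}, $\exp_{x_0}(ta)$ is focal with respect to $x_0$ precisely when $t$ is a zero of at least one of these $n=m_1+\cdots+m_{s+1}$ scalar factors.

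Next I count zeros on $(0,\pi/\mathfrak{B})$. For each $i\le s$ the uniform bound $d_i\le\mathfrak{B}$ forces $td_i\in(0,\pi)$, which lies inside the first monotone branch of cotangent; therefore $t\mapsto d_i\cot(td_i)$ is strictly decreasing from $+\infty$, and each equation $d_i\cot(td_i)=\lambda_{i,j}$ has at most one solution. Likewise each equation $1/t=\mu_k$ has the unique candidate $t=1/\mu_k$. Hence the $i$-th block contributes at most $m_i\le n$ focal points, and summing over the $s+1$ blocks gives $L\le(s+1)n$ as claimed.

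The only subtle ingredient is the role of $\mathfrak{B}$: it is designed precisely so that $\pi/\mathfrak{B}$ lies inside the first monotonicity interval of every $\cot(td_i)$ simultaneously, which is what turns the focal count into a tidy ``one root per eigenvalue'' estimate. Apart from that, the proof is purely algebraic; the curvature-adapted hypothesis does the main work by making the diagonal matrix in (\ref{focal-formula}) simultaneously diagonalizable with $A_a$, so I do not expect any deeper obstacle.
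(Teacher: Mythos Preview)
Your argument is correct and follows the same route as the paper's: simultaneous diagonalization via the curvature-adapted hypothesis, followed by counting roots of the resulting scalar equations using the monotonicity of cotangent on its first branch. In fact your block-by-block count already gives the sharper bound $L\le\sum_{i=1}^{s+1} m_i=n$ (there are exactly $n$ scalar factors, each with at most one zero on $(0,\pi/\mathfrak{B})$); the paper's looser phrasing---that each of the $s+1$ functions $d_i\cot(td_i)$, $1/t$ may equal any of the $n$ eigenvalues of $A_a$---yields only $(s+1)n$, and you needlessly relax your own estimate via $m_i\le n$ to match it.
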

\begin{proof} The first equality follows immediately from Proposition \ref{Pro24}. Since $M$ is curvature-adapted, $A_a$ and
$\sqrt{R_a}|_{a^\bot}$ can be diagonalized simultaneously. Therefore,
the equality (\ref{focal-formula}) holds if and only if $d_i\cot(td_i)$ or $\frac{1}{t}$ equals some eigenvalue of $A_a$,
which can occur at most $(s+1)n$ times for $t\in [0,\frac{\pi}{\mathfrak
{B}})$. This proves the second inequality of the corollary.
\end{proof}

As a direct application we obtain the following estimate for a universal lower bound of the distance between the two focal submanifolds of any curvature-adapted equifocal hypersurface.
\begin{cor} \label{cor-lower-bound}
With notations as above, suppose that $M^n$ is a curvature-adapted equifocal hypersurface in $N$ and $M_{\pm}$ are the focal submanifolds defined in Theorem \ref{th0}, then the distance between the two focal submanifolds satisfies
\begin{equation}\label{lower-bound}
d(M_{+},M_{-})\geq\frac{\pi}{\mathfrak
{B}((n+2-r)n+1)},
\end{equation} i.e., $d(M_+,M_-)$ has a lower bound which only
depends on $N$.
\end{cor}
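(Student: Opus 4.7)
The plan is to combine the focal-spacing data of Theorem \ref{th0} with the upper bound on focal parameters from Corollary \ref{c25}. Fix $x_0\in M$ and a unit normal $v=\pi_*(a)$, and write $d:=l/(2g)$. By Theorem \ref{th0}(b)--(d),(g), the focal parameters along the normal geodesic $\gamma(t)=\exp_{x_0}(tv)$ are $\theta+(j-1)d$ for $j=1,\dots,2g$ (and their translates by $l$), with $0<\theta<d$; moreover the focal submanifolds $M_+=M_\theta$ and $M_-=M_{\theta-d}$ satisfy $d(M_+,M_-)=d$. The inequality $\leq$ is immediate from the normal segment of length $d$, while the reverse follows from the cell decomposition $N=D_1\cup D_2$ in Theorem \ref{th0}(g): any rectifiable path from $M_+$ to $M_-$ must cross $M=D_1\cap D_2$, and the tubular structure of $D_1,D_2$ forces the two subpaths on either side of the crossing to sum to at least $d$.

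The lower bound then follows by contradiction. Suppose $d<\pi/(\mathfrak{B}\,N)$, where $N:=(n+2-r)n+1$, so that $Nd<\pi/\mathfrak{B}$. Since $0<\theta<d$, the $N$ distinct real numbers
\[
\theta,\;\theta+d,\;\theta+2d,\;\ldots,\;\theta+(N-1)d
\]
all lie in $(0,Nd)\subset[0,\pi/\mathfrak{B})$, and each is a focal parameter of $M$ at $x_0$ by Theorem \ref{th0}(b) together with the periodicity $\gamma(t+l)=\gamma(t)$. Hence the set of focal parameters of $M$ at $x_0$ lying in $[0,\pi/\mathfrak{B})$ has cardinality at least $N$. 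On the other hand, since $M$ is curvature-adapted, Corollary \ref{c25} bounds this cardinality above by $(s+1)n\leq(n+2-r)n=N-1$, a contradiction. Therefore $d\geq\pi/(\mathfrak{B}((n+2-r)n+1))$, which is exactly the asserted estimate for $d(M_+,M_-)$.

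The proof is essentially bookkeeping once the two ingredients are in place, so I do not anticipate serious technical difficulty. The one step that deserves a sentence or two of extra care is the identification $d(M_+,M_-)=d$: the upper bound is immediate, but the reverse bound, while standard for cohomogeneity-one-like equifocal foliations (\emph{cf.} Theorem \ref{th0}(f)--(g)), should be stated explicitly for completeness, since otherwise one only controls the normal-geodesic distance rather than the intrinsic distance in $N$.
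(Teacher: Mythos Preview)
Your proof is correct and follows essentially the same route as the paper's: both combine the equidistant focal-parameter structure from Theorem~\ref{th0} (spacing $d=l/(2g)$ along the normal circle) with the bound $L\le(s+1)n\le(n+2-r)n$ from Corollary~\ref{c25}, the paper via the direct pigeonhole inequality $(L+1)d\ge\pi/\mathfrak{B}$ and you via the logically equivalent contrapositive. One cosmetic point: rename your constant $N=(n+2-r)n+1$, since $N$ already denotes the ambient symmetric space.
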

\begin{proof}
One can conclude from Theorem \ref{th0} that along a normal geodesic $\exp_{x_0}(ta)$, the focal points of $M$ in $N$ with respect to $x_0$ occur alternately and equidistantly in the two focal submanifolds $M_{+}$ and $M_{-}$. Therefore, the distance between any two succeeding focal points occurring in $\exp_{x_0}(ta)$, $t\in [0,\frac{\pi}{\mathfrak{B}})$, equals $d(M_{+},M_{-})$ and the distance between the last focal point and $\exp_{x_0}(\frac{\pi}{\mathfrak{B}}a)$ is no more than $d(M_{+},M_{-})$.
Hence we have $$(L+1)d(M_{+},M_{-})\geq \frac{\pi}{\mathfrak{B}},$$
 where $L$ is the number of focal points along the normal geodesic $\exp_{x_0}(ta)$, $t\in [0,\frac{\pi}{\mathfrak{B}})$, of $M$ with respect to $x_0$. Then applying the inequality of Corollary \ref{c25} will complete the proof.
\end{proof}

As another corollary of proposition \ref{Pro24}, we observe a direct proof of the following result which is a part of Theorem 1.4 of \cite{GTY} proved by some knowledge of algebraic geometry.
\begin{cor}\label{th2}(\cite{GTY})
A curvature-adapted equifocal hypersurface in a
simply connected compact rank one symmetric space has constant
principal curvatures.
\end{cor}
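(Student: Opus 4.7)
The plan is to deduce the result directly from Proposition \ref{Pro24} combined with Theorem \ref{th0}. The rank one hypothesis plays two roles. First, $r=1$ forces the trivial block to vanish, i.e., $m_{s+1}=r-1=0$, so the matrix in (\ref{focal-formula}) has no $\frac{1}{t}I_{m_{s+1}}$ summand. Second, as already noted in the excerpt, when $N$ has rank one the positive numbers $d_i=|\alpha(a)|$ depend only on $N$ and not on the choice of $x_0\in M$; accordingly, the eigenspaces $\mathfrak{p}_{\alpha_i}$ of $\sqrt{R_a}|_{a^\bot}$ have constant multiplicities $m_i$ and assemble into smooth subbundles $E_i\subset TM$ of fixed rank.

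Next I would invoke the curvature-adapted assumption: $A_a$ commutes with $R_a|_{TM}$ and therefore preserves each $E_i$. Diagonalising $A_a$ within each $E_i$, the focal equation (\ref{focal-formula}) factors as a product over blocks and reduces to the clean statement that $\exp_{x_0}(ta)$ is a focal point of $M$ precisely when $d_i\cot(td_i)=\lambda$ for some index $i$ and some eigenvalue $\lambda$ of $A_a$ on the fiber $E_i|_{x_0}$. Equivalently, every principal curvature $\lambda$ of $A_a$ on $E_i$ takes the form $\lambda=d_i\cot(t\,d_i)$ for some focal time $t$.

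By the equifocal hypothesis and Theorem \ref{th0}(b), the focal times along any normal circle are the fixed constants $t_k=\theta+(k-1)l/(2g)$ for $1\leq k\leq 2g$; together with the constancy of $d_i$, this forces every eigenvalue of $A_a|_{E_i}$ to lie in the finite set $\{d_i\cot(t_k d_i):1\leq k\leq 2g\}$, whose elements are independent of $x_0$. Since the eigenvalues of $A_a|_{E_i}$ depend continuously on $x_0\in M$ (as roots of a smoothly varying characteristic polynomial on a smooth subbundle of fixed rank) and $M$ is connected, a continuous function taking values in a finite set must be constant, hence all principal curvatures of $M$ are constants. The argument is essentially a combinatorial post-processing of Proposition \ref{Pro24}; the only point requiring any care is the final continuity/connectedness step, which is handled uniformly by the rank one hypothesis guaranteeing constant multiplicities for $R_a|_{TM}$ across $M$.
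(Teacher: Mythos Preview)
Your proposal is correct and follows essentially the same route as the paper's proof: both combine Proposition \ref{Pro24} with the curvature-adapted hypothesis to reduce the focal condition to equalities of the form $\lambda=d_i\cot(td_i)$, then invoke Theorem \ref{th0} to pin down the focal times as constants and the rank-one hypothesis to make the $d_i$ constant, and finish by continuity of the principal curvatures into a discrete set. Your version is slightly tidier in that you explicitly observe $m_{s+1}=r-1=0$ (so the $\tfrac{1}{t}$-block is absent) and track eigenvalues block-by-block on the $E_i$, whereas the paper carries the $\tfrac{1}{t}$-terms along harmlessly and argues with the globally ordered principal curvatures; but these are cosmetic differences, not a different strategy.
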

\begin{proof} Given a unit normal field $v$
for a curvature-adapted equifocal hypersurface $M^n$ in the simply connected compact symmetric space $N^{n+1}$, we have $n$ continuous functions, the principal curvatures
$\lambda_1\geq\lambda_2\geq...\geq\lambda_{n}$ on $M$. For $x\in M$ and $t\in\mathbb{R}$, we know from Proposition
\ref{Pro24} that, $\exp_{x}(tv)$ is a focal point of $M$ with respect to $x$ if
and only if
$$
\det(\left(
\begin{array}{cccc}
d_1\cot(td_1)I_{m_1}&&&\\

&\ddots&&\\
&&d_s\cot(td_s)I_{m_s}&\\
&&&\frac{1}{t}I_{m_{s+1}}
\end{array}
\right)-A_{v})=0,
$$
where $A_v$ is the shape operator of $M$ at $x$ with respect to $v$. Since now
$M$ is curvature-adapted, $A_v$ can be diagonalized simultaneously with $\sqrt{R_v}|_{v^\bot}$. Hence the equation above holds if and only if for some $t=t(x)\in\mathbb{R}$, $1\leq i\leq s$ and $1\leq k \leq n$,
$\lambda_k(x)= d_i\cot(td_i)$, or
$\lambda_k(x)=\frac{1}{t}$. On the other hand, since $M$ is equifocal, such functions $t=t(x)$ should be constant on $M$ by Theorem \ref{th0}. In fact, by Theorem \ref{th0},
each normal geodesic $exp_x(tv)$ is a circle of constant length $l$ and there exists an even number $2g$ and
$0<\theta<\frac{l}{2g}$, such that the focal points on each normal
circle $T_x=exp(\nu(M)_x)$ are
$x(j)=exp((\theta+\frac{(j-1)l}{2g})v(x))$, $1\leq j \leq2g$, which means that the functions $t(x)=\theta+\frac{(j-1)l}{2g}$ are constant on $M$. In conclusion, we have
$$Im\lambda_k\subset
\{d_i\cot{td_i}\mid t=\theta+\frac{j}{2g}l, j\in
\mathbb{Z}, 1\leq i\leq s\}\bigcup\{\frac{1}{t}\mid t=\theta+\frac{j}{2g}l, j\in
\mathbb{Z}\},\quad for~~ 1\leq k\leq n.$$
Now by the assumption that $N$ is a rank one symmetric space, we know that the numbers $d_i$, $i=1,...,s$, are constant on $M$ independent of the choice of $x\in M$. Finally,
since the principal curvature functions are continuous and the right set above is totally discontinuous, it follows that $\lambda_1,
\lambda_2,...,\lambda_{n}$ are constant on $M$.

The proof is now completed.
\end{proof}

\section{Proof of Theorem \ref{th1}}\label{proof}
In this section, based on the results in previous sections we are now able to prove the finiteness Theorem \ref{th1}.
Firstly we recall a general finiteness theorem for submanifolds proved in \cite{Cor90}.

Let $N$ be a Riemannian manifold, $f_1:M_1\rightarrow N$ and
$f_2:M_2\rightarrow N$ be two immersions of compact manifolds in
$N$. The immersions $f_1$ and $f_2$ are said to be \emph{equivalent} if there
exists a diffeomorphism $\varphi:M_1\rightarrow M_2$ and a homotopy
$F:M\times[0,1]\rightarrow N$ with $F_0=f_2\circ \varphi$, $F_1=f_1$,
and $F_t$ an immersion for any $t\in [0,1]$. For this notion of equivalent immersions, Corlette \cite{Cor90} proved the following finiteness result.
\begin{thm} (\cite{Cor90})\label{t32}
Let $N$ be a compact Riemannian manifold, $B$, $d$, and $v$ three
positive constants, and $\mathscr{I}$ the set of all immersions
$f:M\rightarrow N$ satisfying:
\begin{itemize}
\item[(1)] $M$ is a compact manifold;
\item[(2)] $|\Pi|\leq B$, where $|\Pi|$ is the pointwise operator norm
of the second fundamental form;
\item[(3)] either $diam(M)\leq d$ or $vol(M)\leq v$.
\end{itemize}
Then $\mathscr{I}$
contains only finitely many equivalence classes of immersions.
\end{thm}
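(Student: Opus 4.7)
The plan is to establish precompactness of $\mathscr{I}$ in a $C^{1,\alpha}$ topology on immersions (after passing to the diffeomorphism quotient of the source) and then to show that immersions sufficiently close in this topology automatically represent the same equivalence class. Finiteness will then follow because the equivalence classes form an open partition of a precompact space.

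First I would control the intrinsic geometry of $M$ by combining the Gauss equation with $|\Pi|\leq B$ and the compactness of $N$: since the sectional curvature of $N$ is uniformly bounded, the Gauss equation yields a uniform bound $|K_M|\leq C=C(N,B)$ on the sectional curvature of the induced metric, and only finitely many dimensions $n=\dim M\leq\dim N$ occur. Next I would invoke a monotonicity formula for immersions with bounded mean curvature into a manifold of bounded geometry (Simon-type inequality) to obtain a uniform lower bound $c_0=c_0(N,B)>0$ on the volume of any connected component of $f(M)$. Under hypothesis (iii), either the diameter bound forces $M$ to be connected with $\mathrm{vol}(M)$ bounded above via Bishop--Gromov, or the volume bound caps the number of components by $v/c_0$ with a per-component diameter bound coming from local graph representations. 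With bounds on sectional curvature, diameter, and volume (from below, per component), Cheeger's finiteness theorem then reduces the problem to finitely many source diffeomorphism types within each dimension.

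With the source diffeomorphism class fixed, the bound $|\Pi|\leq B$ translates, in normal coordinates of $N$ and a fixed harmonic atlas on $M$, into uniform $C^{1,\alpha}$ estimates on the components of each $f\in\mathscr{I}$; Arzel\`a--Ascoli produces $C^{1,\alpha}$-convergent subsequences, after pre-composition with source diffeomorphisms, whose limits are again immersions. Finally, if $f_1,f_2\in\mathscr{I}$ share a source and are sufficiently $C^1$-close, then in a tubular neighborhood of $f_1(M)\subset N$ one can write $f_2=\exp_{f_1}\!\circ\,\xi$ for a small normal section $\xi$, and the straight-line homotopy $F_t=\exp_{f_1}(t\xi)$ remains an immersion throughout, providing the required homotopy with $\varphi=\mathrm{id}$. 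Thus equivalence classes are open in $\mathscr{I}$, forcing their number to be finite.

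The hard part is the volume-bounded alternative in the intermediate step: intrinsic bounded sectional curvature plus bounded total volume does not by itself control the diameter (think of long thin tubes), so one must exploit the ambient compactness of $N$ together with the pointwise bound $|\Pi|\leq B$ to show that $M$ admits graph charts of a uniform size in a tubular neighborhood of $f(M)\subset N$, and hence is covered by a uniformly bounded number of such charts. Once this quantitative graph-chart estimate is established, the remainder is a standard Cheeger--Gromov compactness argument as outlined above.
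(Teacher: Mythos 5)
The paper does not prove this statement: it is quoted from Corlette's article \cite{Cor90} and used as a black box in the proof of Theorem \ref{th1}, so there is no internal argument to compare yours against. Judged on its own, your outline follows the same Cheeger--Gromov compactness strategy as the cited reference, and the overall architecture --- precompactness modulo source diffeomorphisms, plus openness of equivalence classes via the straight-line homotopy in the exponential map --- is sound. Three points would need care in a full write-up. First, the section $\xi$ with $f_2=\exp_{f_1}\circ\,\xi$ is a small section of $f_1^{*}TN$, not of the normal bundle of $f_1$; this does not affect the argument that $F_t=\exp_{f_1}(t\xi)$ stays an immersion when $\xi$ is $C^1$-small. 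Second, ``equivalence classes form an open partition of a precompact space'' does not by itself yield finiteness (a totally bounded space can be partitioned into infinitely many open pieces); what you need, and what your homotopy argument actually delivers, is openness with a radius $\epsilon(N,B)>0$ uniform over $\mathscr{I}$, so that the number of classes is dominated by a covering number, or equivalently the sequential argument that representatives of infinitely many classes would have a Cauchy subsequence whose distant terms are equivalent. Third, a ``fixed harmonic atlas on $M$'' is not available a priori, since harmonic coordinates depend on the induced metric, which varies with $f$; the clean order of operations is to first get $C^{1,\alpha}$ Cheeger--Gromov control of the induced metrics (curvature bound from Gauss, ball-volume lower bound from the uniform graph charts, injectivity radius from Cheeger--Gromov--Taylor) and only then convert $|\Pi|\leq B$ into $C^{1,1}$ bounds on $f$ in those charts. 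Your identification of the volume-bounded alternative as the delicate case, resolved by uniform graph charts giving a bound on the number of components and a per-component diameter bound, is exactly the right fix.
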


Next we recall the following lemma in \cite{Gr04} on estimating the principal curvatures.

Let $M$ be a submanifold in a Riemannian manifold $N$. For any point
$x\in M$ and unit normal vector $v\in \nu(M)_x$, define $$\kappa(v):=sup\{\langle A_v
X,X\rangle \mid \|X\|=1,X\in TM_x\}=\emph{the maximal eigenvalue of $A_v$},$$ where $A_v$ denotes the
shape operator of $M$ with respect to $v$ at $x$. Also, we recall that the cut-focal radius of $M$ at $x$ in the direction $v$ is defined by  $$e_c(x,v):=sup\{t>0\mid d(exp_x(tv),M)=t\}.$$
\begin{lem}\label{L33}(cf. \cite{Gr04}, P.150, Lemma 8.9.)
 Let $M$ be a submanifold in a Riemannian manifold $N$ of nonnegative sectional curvatures. Then for any point
$x\in M$ and unit normal vector $v\in \nu(M)_x$, we have
$$\kappa(v)e_c(x,v)\leq 1.$$
\end{lem}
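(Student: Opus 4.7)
The plan is to exhibit an $M$-Jacobi field along the normal geodesic $\gamma(t)=\exp_x(tv)$ whose flat-space analog vanishes at $t=1/\kappa(v)$, use the hypothesis $K\ge 0$ to force a genuine focal point at some $t_0\le 1/\kappa(v)$, and finally invoke the classical fact that the cut-focal radius cannot exceed the first focal distance. Since the inequality is trivial when $\kappa(v)\le 0$, I assume throughout $\kappa:=\kappa(v)>0$ and fix a unit principal vector $X\in T_xM$ with $A_vX=\kappa X$.

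Extend $X$ to a curve $x(s)\subset M$ with $x(0)=x$, $x'(0)=X$, and parallel-transport $v$ along $x(s)$ in the normal connection to a unit field $v(s)$. The variation $V(s,t):=\exp_{x(s)}(tv(s))$ of $\gamma$ through normal geodesics yields the Jacobi field $J(t):=\partial_s V(0,t)$ along $\gamma$, which by Proposition \ref{Jacobi} (with $\nabla^{\bot}_{x'(0)}v=0$) satisfies $J(0)=X$ and $J'(0)=-A_vX=-\kappa X$. In a flat comparison ambient space the Jacobi field with these initial data would simply be $J_0(t)=(1-\kappa t)X$, vanishing precisely at $t=1/\kappa$.

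The heart of the proof is the submanifold version of Rauch's comparison: since the sectional curvature of $N$ is nonnegative and the flat-model shape operator matches $A_v$ on the $X$-direction, the first focal point of $M$ along $\gamma$ occurs no later than the first focal point in the flat model. Equivalently, evaluating the submanifold index form
\begin{equation*}
I_\tau(W,W)=\int_0^\tau \bigl(|W'|^2-\langle R(W,\gamma')\gamma',W\rangle\bigr)\,dt+\langle A_vW(0),W(0)\rangle
\end{equation*}
on the competitor $W(t)=(1-\kappa t)\widetilde{X}(t)$, with $\widetilde{X}(t)=P_\gamma(0,t)X$ the parallel translate of $X$, and using $K\ge 0$ shows that $I_{1/\kappa}(J,J)\le 0$; standard Morse-theoretic arguments then produce a focal point $\gamma(t_0)$ for some $t_0\in(0,1/\kappa]$, i.e.\ $d\eta_{t_0 v}$ is singular in the direction $X$.

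The last step is to deduce $e_c(x,v)\le t_0$. Past $\gamma(t_0)$ the degeneracy of $d\eta_{t_0v}$ in the direction $X$, combined with a broken-geodesic argument applied to the family $\exp_{x(s)}((t_0+\varepsilon)v(s))$, yields for every small $\varepsilon>0$ a point $y\in M$ with $d(y,\gamma(t_0+\varepsilon))<t_0+\varepsilon$; hence $e_c(x,v)\le t_0\le 1/\kappa$, giving $\kappa(v)e_c(x,v)\le 1$. The main technical obstacle is setting up the focal Rauch/index comparison with the inhomogeneous initial data $J'(0)=-\kappa X$: one must track the boundary term $\langle A_vW(0),W(0)\rangle$ in the index form and verify the admissibility of $(1-\kappa t)\widetilde{X}$ as a comparison field. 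Once that bookkeeping is done, the inequality is an immediate consequence of $K\ge 0$.
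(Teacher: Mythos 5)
The paper does not prove this lemma at all: it is imported verbatim from Gray's book (\cite{Gr04}, Lemma 8.9), so there is no in-paper argument to compare against. Your proof is the standard focal-point comparison argument that the citation points to, and its overall architecture is sound: reduce to $\kappa(v)>0$, take a unit eigenvector $X$ of $A_v$, show via the index form and $K\ge 0$ that the first focal point of $M$ along $\gamma$ occurs at some $t_0\le 1/\kappa$, and then use the Morse-theoretic fact that a normal geodesic cannot minimize distance to $M$ past its first focal point to get $e_c(x,v)\le t_0$. Two bookkeeping points need repair. First, with the paper's convention (Proposition \ref{Jacobi} gives $J'(0)=-A_vX$, and the model case of a round sphere with inward normal has $A_v=\frac{1}{r}\,\mathrm{id}$ and focal distance $r$), the boundary term in the index form for the ``from $M$ to a point'' problem must be $-\langle A_vW(0),W(0)\rangle$, not $+\langle A_vW(0),W(0)\rangle$ as you wrote; with your sign the test computation gives $I_{1/\kappa}(W,W)=2\kappa-(\text{nonnegative})$, which does not yield the needed nonpositivity, whereas with the correct sign one gets $\kappa-\kappa-\int_0^{1/\kappa}(1-\kappa t)^2K(\widetilde X,\gamma')\,dt\le 0$. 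Second, the conclusion of that computation is $I_{1/\kappa}(W,W)\le 0$ for the admissible competitor $W(t)=(1-\kappa t)\widetilde X(t)$ (which vanishes at $t=1/\kappa$), not $I_{1/\kappa}(J,J)\le 0$; the actual $M$-Jacobi field $J$ need not vanish at $1/\kappa$ and is not an admissible test field there. Once the index form is positive definite on fields vanishing at $\tau$ precisely when there is no focal point in $(0,\tau]$, the existence of the nonzero competitor with $I_{1/\kappa}(W,W)\le 0$ forces a focal point in $(0,1/\kappa]$, and the rest of your argument goes through.
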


Now we are ready to prove the finiteness Theorem \ref{th1}.\\
\textbf{Proof of Theorem \ref{th1}.} Since any parallel hypersurface of a given curvature-adapted hypersurface in a symmetric space is also curvature-adapted, by Theorem \ref{th0}, without loss of generality we can assume that $M^n$ is a curvature-adapted equifocal hypersurface in the
simply connected compact symmetric space $N$ with $d(M, M_+)=d(M, M_-)$ and $v$ a given unit normal vector field. Moreover, $\{M_t=\eta_{tv}(M)\mid t\in(-d(M, M_+), d(M, M_+))\}$ is a family of parallel (diffeomorphic) curvature-adapted equifocal hypersurfaces that foliates the whole space $N$ with two singular varieties, the two focal submanifolds $M_{\pm}$. Meanwhile, it follows from Corollary \ref{cor-lower-bound} that there exists a positive number $D$ depending only on $N$ such that
$$d(M, M_+)=\frac{1}{2}d(M_+, M_-)\geq D.$$ Noticing that the volume $Vol(M_t)$ of $M_t$ depends continuously on $t\in(-d(M, M_+), d(M, M_+))$, we find that there exists some $\xi\in[-\frac{D}{2}, \frac{D}{2}]$ such that
$$Vol(N)=\int_{-d(M, M_+)}^{d(M,
M_+)}Vol(M_t)dt\geq\int_{-\frac{D}{2}}^{\frac{D}{2}}Vol(M_t)dt=D~Vol(M_\xi).$$ This shows
$Vol(M_\xi)\leq\frac{Vol(N)}{D}$. In addition, by the choice of $\xi$, we have
$$d(M_\xi,
M_+)\geq \frac{D}{2},\quad d(M_\xi, M_-)\geq \frac{D}{2}.$$

On the other hand, the structural results in Theorem \ref{th0} show that at any point $x$ in an equifocal hypersurface with respect to a unit normal vector $v$, the focal points coincide with the cut-focal
points and both are the points in the intersection of the normal geodesic circle $exp_x(\nu(M)_x)$ with the focal submanifolds $M_{\pm}$. Hence the cut-focal radius $e_c(x,v)$ is nothing but the distance from $x$ to $M_{+}$ or $M_{-}$ according to $exp_x(e_c(x,v)v)\in M_{+}$ or $exp_x(e_c(x,v)v)\in M_{-}$ respectively. In particular, by the discussion above, on the equifocal hypersurface $M_\xi$ we have for any point
$x\in M_{\xi}$ and unit normal vector $v\in \nu(M_{\xi})_x$, $$e_c(x,v)\geq\frac{D}{2}.$$
As it is well known, a compact simply connected symmetric space has nonnegative sectional curvatures. It follows from Lemma \ref{L33} that the maximal eigenvalue $\kappa(v)$ of the shape operator $A_v$ of $M_\xi$ satisfies
$$\kappa(v)\leq\frac{1}{e_c(x,v)}\leq \frac{2}{D}.$$
Since this inequality holds for any unit normal vector $v\in \nu(M_{\xi})_x$, it follows that each eigenvalue $\lambda$ of $A_v$ satisfies $$|\lambda|\leq \frac{2}{D}.$$ This shows that the operator norm $|\Pi|$ of the second fundamental form at $x\in M_\xi$ satisfies
$$|\Pi|\leq \frac{2}{D}.$$
Hence $M_{\xi}$ satisfies all the conditions of Theorem \ref{t32} so that there are only finitely many equivalence classes of such immersions of curvature-adapted equifocal hypersurfaces.

The proof is now completed.
$\hfill{\square}$
\section{Equifocal hypersurfaces in compact rank one symmetric spaces}\label{in rank one}
In this section, we give a more detailed study for equifocal hypersurface in compact rank one symmetric spaces.

First, we investigate some examples of equifocal hypersurfaces in complex projective spaces.
We will construct them through Hopf fibrations by projecting the OT-FKM-type isoparametric hypersurfaces in spheres which almost cover all isoparametric hypersurfaces with four distinct principal curvatures in spheres (cf. \cite{CCJ07}, \cite{Chi23}, \cite{Imm08}). Now we recall some fundamental definitions. For a symmetric Clifford system ${A_0,...,A_m}$ on $\mathbb{R}^{2l}$, \textit{i.e.}, $A_i$'s are symmetric matrices satisfying $A_iA_j+A_jA_i=2\delta _{ij}I_{2l}$, the OT-FKM-type isoparametric polynomial $F$ on $\mathbb{R}^{2l}$ is then defined as (cf. \cite{FKM81}):
$$F(z)=|z|^4-2\sum_{p=0}^{m}\langle A_pz,z\rangle^2,$$
where we take the coordinate system $z=(x^t,y^t)^t=(x_1,...,x_l,y_1,...,y_l)^t\in\mathbb{R}^{2l}$. By orthogonal transformations, without loss of generality we can write
$$A_0=\left(
\begin{array}{cc}
I&0\\
0&-I
\end{array}
\right),\quad
\ A_1=\left(
\begin{array}{cc}
0&I\\
I&0
\end{array}
\right),$$
$$A_j=\left(
\begin{array}{cc}
0&-E_j\\
E_j&0
\end{array}
\right),\quad j=2,...,m,$$
where $\{E_2,...,E_m\}$ is a skew-symmetric Clifford system on $\mathbb{R}^l$, \textit{i.e.}, $E_i$'s are skew-symmetric matrices satisfying $E_iE_j+E_jE_i=-2\delta _{ij}I_{l}$. It can be verified that the level hypersurfaces of this polynomial restricted to the unit sphere have 4 distinct constant principal curvatures with multiplicities $m_1=m$ and $m_2=l-m-1$, provided $l-m-1> 0$. By using the well-known Hopf fibration $\pi:S^{2n+1}\rightarrow\mathbb{C}P^{n}$, \cite{Wan82} proved that a hypersurface $M$ in $\mathbb{C}P^{n}$ is isoparametric (equifocal) if and only if its inverse image $\pi^{-1}(M)$ under the Hopf fibration $\pi$ is an isoparametric hypersurface in $S^{2n+1}$. However, given one isoparametric hypersurface in $S^{2n+1}$, it can be neither projectable nor projected uniquely up to isometry through the Hopf fibrations with different $S^1$-actions. What is more, the induced equifocal hypersurfaces in $\mathbb{C}P^{n}$ may have different diffeomorphism types as the following examples will show.
\begin{exmp}\label{example41}
Consider the isoparametric hypersurface in spheres of OT-FKM-type with $m=1$, $l\geq 4$ .
 For $z=(x^t,y^t)^t\in \mathbb{R}^{2l}=\mathbb{R}^{l}\oplus \mathbb{R}^{l}$, $$F(z)=|z|^4-2\{\langle A_0z,z\rangle^2+\langle A_1z,z\rangle^2\},$$
 then
  \begin{eqnarray*}
&&M^{2l-2}=F^{-1}(0)\cap S^{2l-1},\\
&&M_+^{2l-3}= F^{-1}(1)\cap S^{2l-1}=\{(x^t,y^t)^t\mid|x|^2=|y|^2=\frac{1}{2}, \langle x,y\rangle=0\},\\
&&M_-^{l}=F^{-1}(-1)\cap S^{2l-1}=\{(x^t,y^t)^t\mid|x|^2+|y|^2=1, |x|^2|y|^2=\langle x,y\rangle^2\},
\end{eqnarray*}
are the isoparametric hypersurface and the two focal submanifolds in $S^{2l-1}$ respectively. Define $$\Phi: S^1\times S^{l-1}\rightarrow M_-,\ (e^{i\varphi}, w)\mapsto  (\cos{\varphi}\cdot w,~ \sin{\varphi}\cdot w),$$  then we observe that $\Phi$ is a two-to-one covering map. Additionally, $\Phi: S^1\times S^{l-1}\rightarrow S^{2l-1}$ is an isometric immersion from the standard product $S^1\times S^{l-1}$. Hence, we get the isometric diffeomorphism: $M_-\cong (S^1\times S^{l-1})/\mathbb{Z}_2$.

1. Define a complex structure $J:\mathbb{R}^{2l}\rightarrow\mathbb{R}^{2l}$ by $(x^t,y^t)^t\mapsto (y^t,-x^t)^t$. And the corresponding $S^1$-action on $\mathbb{R}^{2l}$ is defined as: $e^{i\theta}\cdot z=\cos{\theta}z+\sin{\theta}J(z)$. Clearly $F$ is $S^1$ invariant, \textit{i.e.}, $F(e^{i\theta}\cdot z)=F(z)$ for any $z\in \mathbb{R}^{2l}$ and $e^{i\theta}\in S^1$. Denote by $\pi_J:S^{2l-1}\rightarrow\mathbb{C}P^{l-1}$ the associated Hopf fibration. Hence, by \cite{Wan82}, $\widetilde{M}^{2l-3}=M^{2l-2}/S^1=\pi_J(M^{2l-2})$ is the isoparametric hypersurface in $\mathbb{C}P^{l-1}$ corresponding to $M^{2l-2}$ in $S^{2l-1}$, and $\widetilde{M}_+^{2l-4}=M_+^{2l-3}/S^1=\pi_J(M_+^{2l-3})$, $\widetilde{M}_-^{l-1}=M_-^{l}/S^1=\pi_J(M_-^{l})$ are the corresponding focal submanifolds in $\mathbb{C}P^{l-1}$ respectively. As defined and calculated in \cite{GTY}, the $\alpha$-invariant is constant on each level hypersurface of $F|_{S^{2l-1}}$ in this case, which implies that $\widetilde{M}^{2l-3}$ is homogeneous in $\mathbb{C}P^{l-1}$. In order to identify $\widetilde{M}_-^{l-1}$, we need to determine how $S^1$ acts on $M_-^{l}$. Since $M_-^{l}=\Phi(S^1\times S^{l-1})$ as observed above, one can see that $S^1$ acts on $M_-$ as
  \begin{eqnarray*}
e^{i\theta}\cdot\Phi(e^{i\varphi},w) &=&e^{i\theta}\cdot(\cos{\varphi}w, \sin{\varphi}w)\\
&=& \cos{\theta}(\cos{\varphi}w, \sin{\varphi}w)+\sin{\theta}(\sin{\varphi}w,-\cos{\varphi}w)\\
&=&(\cos{(-\theta+\varphi)}w, \sin{(-\theta+\varphi)}w)\\
&=&\Phi(e^{i(-\theta+\varphi)},w).
\end{eqnarray*}
Consequently we have a diffeomorphism: $\widetilde{M}_-^{l-1}\cong((S^1\times S^{l-1})/\mathbb{Z}_2)/S^1\cong \mathbb{R}P^{l-1}$. This implies that $\pi_1(\widetilde{M}^{2l-3})=\pi_1(\widetilde{M}_-^{l-1})=\mathbb{Z}_2$ since $\widetilde{M}^{2l-3}$ is an $S^{l-2}$-bundle over $\widetilde{M}_-^{l-1}$ and $l\geq 4$ as assumed.

2. Assume $l$ is even. Denote $l=2n+2,\ n\geq1$. We want to define another complex structure $J^{'}$. For $z=(x^t,y^t)^t\in \mathbb{R}^{4n+4}=\mathbb{R}^{2n+2}\oplus \mathbb{R}^{2n+2}$, define $J^{'}z:=((Tx)^t, (Ty)^t)^t$ where $T:\mathbb{R}^{2n+2}\rightarrow\mathbb{R}^{2n+2}$ is a linear transformation satisfying $T^2=-I,\ T^t=-T$. Then we have another $S^1$-action on $\mathbb{R}^{4n+4}$ with respect to $J^{'}$ under which $F$ is also invariant. Denote by $\pi_{J^{'}}:S^{4n+3}\rightarrow\mathbb{C}P^{2n+1}$ the associated Hopf fibration, and let $\widetilde{M^{'}}^{4n+1}$, $\widetilde{M^{'}_+}^{4n}$, and $\widetilde{M^{'}_-}^{2n+1}$ be the corresponding isoparametric hypersurface and the two focal submanifolds in $\mathbb{C}P^{2n+1}$ respectively. By a direct computation, the invariant $\Omega _F$ defined in \cite{GTY} is
 \begin{eqnarray*}
\Omega _F(z) &:=&DF^t\cdot J^{'}\cdot D^2F\cdot J^{'}\cdot DF|_{S^{4n+3}}(z)\\
&=& 64\{2F^2(z)-F(z)-2+16(\langle A_0z,z\rangle^2+\langle A_1z,z\rangle^2)\langle A_0z,J^{'}A_1z\rangle^2\}\\
&=& 64\{2F^2(z)-F(z)-2+64(\langle A_0z,z\rangle^2+\langle A_1z,z\rangle^2)\langle x,Ty\rangle^2\}.
\end{eqnarray*}
For convenience, we take $T(x_1,...x_{2n+2})^t=(x_{n+2}, ..., x_{2n+2}, -x_1,...-x_{n+1})^t$ for $x\in \mathbb{R}^{2n+2}$.
In $M^{4n+2}=F^{-1}(0)\cap S^{4n+3}$, choose two points $z=(x^t,y^t)^t$ and $\hat{z}=(\hat{x}^t,\hat{y}^t)^t$ with $x_i=\sqrt{\frac{1}{2}+\frac{1}{2\sqrt{2}}}\delta_{i1}$, $y_i=\sqrt{\frac{1}{2}-\frac{1}{2\sqrt{2}}}\delta_{i~n+2}$, and $\hat{x}_i=\sqrt{\frac{1}{2}+\frac{1}{2\sqrt{2}}}\delta_{i1}$, $\hat{y}_i=\sqrt{\frac{1}{2}-\frac{1}{2\sqrt{2}}}\delta_{i2}$, for $1 \leq i \leq 2n+2$. Then $\Omega _F(z)=128$ and $\Omega _F(\hat{z})=-128$, \textit{i.e.}, $\Omega _F$ is not constant on $M^{4n+2}$. Then we get that the $\alpha$-invariant defined in \cite{GTY},
$$\alpha=\frac{1}{g^3(1-F^2)^{\frac{3}{2}}}\{g^3F(3-2F^2)+\Omega _F\},$$ is not constant on $M^{4n+2}$, which implies that $\widetilde{M^{'}}^{4n+1}$ is not homogeneous in $\mathbb{C}P^{2n+1}$. Now we are going to identify $\widetilde{M^{'}_-}^{2n+1}$. For this, we also need to determine how $S^1$ acts on $M_-^{2n+2}$ in this case. In fact, in this case $S^1$ acts on $M_-^{2n+2}=\Phi(S^1\times S^{2n+1})$ as
  \begin{eqnarray*}
e^{i\theta}\cdot\Phi(e^{i\varphi},w) &=&e^{i\theta}\cdot(\cos{\varphi}w, \sin{\varphi}w)\\
&=& \cos{\theta}(\cos{\varphi}w, \sin{\varphi}w)+\sin{\theta}(\cos{\varphi}Tw, \sin{\varphi}Tw)\\
&=&(\cos{\varphi}(\cos{\theta}w+\sin{\theta}Tw),\  \sin{\varphi}(\cos{\theta}w+\sin{\theta}Tw))\\
&=&\Phi(e^{i\varphi},\cos{\theta}w+\sin{\theta}Tw).
\end{eqnarray*}
Consequently we have a diffeomorphism $\widetilde{M^{'}_-}^{2n+1}\cong S^1\times\mathbb{C}P^{n}$. This implies that $\pi_1(\widetilde{M^{'}}^{4n+1})=\pi_1(\widetilde{M^{'}_-}^{2n+1})=\mathbb{Z}$ since $\widetilde{M^{'}}^{4n+1}$ is an $S^{2n}$-bundle over $\widetilde{M^{'}_-}^{2n+1}$.
\end{exmp}
This example shows the following
\begin{cor}
For any $n\geq 1$, there exists an isoparametric hypersurface $M^{4n+2}$ in $S^{4n+3}$, from which we get two non-congruent $S^1$-quotient equifocal hypersurfaces in $\mathbb{C}P^{2n+1}$ by choosing different complex structures $J$ and $J^{'}$ on $\mathbb{R}^{4n+4}$. Moreover, these two equifocal hypersurfaces in $\mathbb{C}P^{2n+1}$ are not homotopy equivalent and thus have different diffeomorphism types.
\end{cor}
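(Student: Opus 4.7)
The plan is to specialize Example \ref{example41} to the parameters $m=1$ and $l=2n+2$, which satisfies $l\geq 4$ for every $n\geq 1$. Starting from the OT-FKM polynomial $F$ on $\mathbb{R}^{4n+4}$, this produces the isoparametric hypersurface $M^{4n+2}=F^{-1}(0)\cap S^{4n+3}$ together with its focal submanifolds $M_+^{4n+1}$ and $M_-^{2n+2}\cong(S^1\times S^{2n+1})/\mathbb{Z}_2$. Because $l=2n+2$ is even, a skew-symmetric $T$ on $\mathbb{R}^{2n+2}$ with $T^2=-I$ exists, so both complex structures $J$ and $J'$ of Example \ref{example41} are defined on $\mathbb{R}^{4n+4}$ and leave $F$ invariant. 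Passing to the associated Hopf fibrations yields two equifocal hypersurfaces $\widetilde M^{4n+1}$ and $\widetilde{M'}^{4n+1}$ in $\mathbb{C}P^{2n+1}$ by \cite{Wan82}, which settles the existence claim.

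For the non-homotopy-equivalence (which is the stronger of the two assertions) my plan is to compute $\pi_1$ of each quotient via its focal structure. By Theorem \ref{th0}(d), $\widetilde M$ and $\widetilde{M'}$ are $S^{2n}$-bundles over their respective focal submanifolds $\widetilde M_-$ and $\widetilde{M'}_-$; since $n\geq 1$ the fiber $S^{2n}$ is simply connected, so the long exact homotopy sequence gives $\pi_1(\widetilde M)\cong\pi_1(\widetilde M_-)$ and $\pi_1(\widetilde{M'})\cong\pi_1(\widetilde{M'}_-)$. It then remains to identify the two $S^1$-actions on $M_-$. Following the computations in Example \ref{example41}, the $J$-action rotates the $S^1$-factor of $(S^1\times S^{2n+1})/\mathbb{Z}_2$, producing $\widetilde M_-\cong\mathbb{R}P^{2n+1}$, whereas the $J'$-action moves points along the $T$-orbits in $S^{2n+1}$, producing $\widetilde{M'}_-\cong S^1\times\mathbb{C}P^n$. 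Hence $\pi_1(\widetilde M)=\mathbb{Z}_2$ while $\pi_1(\widetilde{M'})=\mathbb{Z}$, and these groups are non-isomorphic.

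Since homotopy equivalent spaces have isomorphic fundamental groups, $\widetilde M$ and $\widetilde{M'}$ are not homotopy equivalent, and consequently have different diffeomorphism types; in particular they are non-congruent. As an independent confirmation of non-congruence one may evaluate the invariant $\Omega_F$ of \cite{GTY} at the two sample points exhibited in Example \ref{example41}, where the values $\pm 128$ show that the $\alpha$-invariant is non-constant on $\widetilde{M'}$ (so $\widetilde{M'}$ is not homogeneous) while it is constant on $\widetilde M$ (so $\widetilde M$ is homogeneous). The main computational work—the orbit analysis on $M_-$ and the non-constancy of $\Omega_F$—has already been carried out in Example \ref{example41}; the corollary is a direct assembly of those facts for $l=2n+2$, and no substantive obstacle remains beyond noting that the parameter range $n\geq 1$ ensures both $l\geq 4$ and $2n\geq 2$.
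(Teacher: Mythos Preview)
Your proposal is correct and follows essentially the same route as the paper: the corollary is simply the summary of Example~\ref{example41} specialized to $l=2n+2$, and the paper likewise draws the conclusion directly from the two $\pi_1$ computations ($\mathbb{Z}_2$ versus $\mathbb{Z}$) established there via the $S^{2n}$-bundle structure over the focal quotients $\mathbb{R}P^{2n+1}$ and $S^1\times\mathbb{C}P^{n}$.
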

\begin{rem}Even if the isoparametric hypersurfaces in spheres are classified completely, we can not get the classification for isoparametric hypersurfaces in $\mathbb{C}P^{n}$  or in $\mathbb{H}P^{n}$ directly by using the Hopf fibrations. We should be careful that different complex structures may induce different $S^1$-actions, and different $S^1$-actions may give non-diffeomorphic quotient submanifolds in $\mathbb{C}P^{n}$  or in $\mathbb{H}P^{n}$. Thereby Theorem \ref{th3} of us really makes sense in this viewpoint.
\end{rem}

To prepare for the proof of Theorem \ref{th3} we need the following remarkable equality established by Thorbergsson involving $g$, $ m_1$, $m_2$ for an equifocal hypersurfce $M$ in a simply connected symmetric space $N$.
\begin{pro}(\cite{Th})
Let $i$ denote the index of $\gamma |_{[0,2\pi]}$ as a critical point of the energy functional $E$ in the path space $\Omega _{pp}$, where $\gamma |_{[0,2\pi]}$ is a closed geodesic normal to $M$ and $\gamma$ is parameterized such that its minimal period is $2\pi$,  and let $v$ denote its nullity. Then we have $$g(m_1+m_2)=i+v.$$
\end{pro}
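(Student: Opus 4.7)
The plan is to combine a Morse-theoretic interpretation of $i+v$ with the explicit focal-point count provided by Theorem~\ref{th0}(b). Take $\Omega_{pp}$ to be the based loop space at $p=\gamma(0)=\gamma(2\pi)$, so that $\gamma|_{[0,2\pi]}$ is a critical point of the energy functional $E$. By the classical Morse index theorem,
\[
i \;=\; \sum_{t\in(0,2\pi)}\mu_N(t)\quad\text{and}\quad v \;=\; \mu_N(2\pi),
\]
where $\mu_N(t)$ denotes the multiplicity of $\gamma(t)$ as an $N$-conjugate point of $p$ along $\gamma$. Thus $i+v$ equals the total $N$-conjugate multiplicity of $p$ along one full period of $\gamma$.

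The next step is to identify this conjugate-point count with the total $M$-focal multiplicity along $\gamma$ over one period. Both are governed by the Jacobi equation~(\ref{eqn-Jacobi}) on the normal circle, which lies in the one-parameter subgroup $\exp(\mathbb{R}a)\subset N$ since $\nu(M)$ is abelian. Decomposing $\mathfrak{p}=\mathfrak{a}\oplus\sum_{\alpha\in\Delta}\mathfrak{p}_\alpha$ with respect to a maximal abelian subalgebra containing $a$, the $N$-conjugate points of $p$ along $\gamma$ occur at $t=k\pi/\alpha(a)$ (for $k\in\mathbb{Z}_{>0}$, $\alpha\in\Delta$ with $\alpha(a)\neq 0$) with multiplicity $\dim\mathfrak{p}_\alpha$, while the $M$-focal points are detected by $D_1(a)-D_2(a)A_v$ via Theorem~\ref{Th2.2}. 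A root-by-root bookkeeping, matching zeros of $\sin(\alpha(a)t)$ on $(0,2\pi]$ against the action of the shape operator $A_v$ on the corresponding eigenspaces of $\sqrt{R_a}$, shows that the two totals agree. Finally, Theorem~\ref{th0}(b) identifies the total $M$-focal multiplicity over one period as $gm_1+gm_2=g(m_1+m_2)$, which yields $i+v=g(m_1+m_2)$.

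The main obstacle is the matching step: the $N$-conjugate points of $p$ and the $M$-focal points generally occur at different parameters along $\gamma$, so the equality of totals is not automatic. It relies on the rigidity of the equifocal structure—in particular the equidistance $\theta,\theta+l/2g,\ldots,\theta+(2g-1)l/2g$ of the focal points from Theorem~\ref{th0}(b)—together with the root-system symmetry of $N$. The cleanest formulation would realize both sides as a common invariant of $a\in\mathfrak{p}$, of the form $\sum_{\alpha\in\Delta_+}\lfloor 2\alpha(a)\rfloor\dim\mathfrak{p}_\alpha$, computed from the Jacobi field analysis in Section~2; the closedness of $\gamma$ with minimal period $2\pi$ forces the integrality needed to make this formula work, and reading off either side then gives the same integer $g(m_1+m_2)$.
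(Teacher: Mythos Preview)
The paper does not prove this proposition; it is quoted from Thorbergsson's unpublished note \cite{Th} and used as a black box in the proof of Theorem~\ref{th3}. So there is no ``paper's own proof'' to compare against.

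That said, your outline has a genuine gap at exactly the place you flag as ``the main obstacle.'' Your Step~1 (the Morse index theorem gives $i+v=$ total $N$-conjugate multiplicity on $(0,2\pi]$) and Step~3 (Theorem~\ref{th0}(b) gives total $M$-focal multiplicity over one period $=g(m_1+m_2)$) are both fine, but Step~2---that these two totals coincide---is the entire content of the proposition, and your sketch does not establish it. Two concrete obstructions:
\begin{itemize}
\item The ``root-by-root bookkeeping'' presumes that $A_v$ respects the decomposition $a^\perp=(\mathfrak{a}\cap a^\perp)\oplus\bigoplus_\alpha\mathfrak{p}_\alpha$. That is the curvature-adapted condition, which the proposition does \emph{not} assume. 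Without it the focal equation $\det\big(D_1(ta)-D_2(ta)A_{ta}\big)=0$ of Theorem~\ref{Th2.2} does not split by root space, and there is no per-root count to match.
\item Even in the curvature-adapted case, Sturm separation only shows that on each $\mathfrak{p}_\alpha$ the zeros of $\sin(\alpha(a)t)$ and of $\cos(\alpha(a)t)-\tfrac{\lambda}{\alpha(a)}\sin(\alpha(a)t)$ interlace, so the two counts on $(0,2\pi]$ agree up to $\pm1$. They coincide exactly only when $2\alpha(a)\in\mathbb{Z}$. You assert that closedness with minimal period $2\pi$ forces this integrality, but $\gamma(2\pi)=\gamma(0)$ only says $e^{2\pi a}\in K$, which does not by itself imply $\alpha(a)\in\tfrac{1}{2}\mathbb{Z}$ for every root.
\end{itemize}
Your proposed common invariant $\sum_{\alpha\in\Delta_+}\lfloor 2\alpha(a)\rfloor\dim\mathfrak{p}_\alpha$ is indeed the conjugate-point side, but reading off the focal side from it is precisely what remains unproved. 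The architecture is sensible, but the bridge between the conjugate and focal counts needs an actual argument---likely one that exploits the equifocal structure more globally (e.g.\ via the path space or the double disk-bundle decomposition of Theorem~\ref{th0}(g)) rather than a local Sturm comparison.
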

\begin{rem}\label{rem42}
For $N=S^n,\ \mathbb{C}P^n,\ \mathbb{H}P^n$ and the Cayley projective plane $CaP^2$, the equality of Thorbergsson will give the well-known formulas: $g(m_1+m_2)=2(n-1)$ for $S^n$, $g(m_1+m_2)=2n$ for $\mathbb{C}P^n$, $g(m_1+m_2)=4n+2$ for $\mathbb{H}P^n$ and $g(m_1+m_2)=22$ for $CaP^2$.
\end{rem}

Now we are ready to prove Theorem \ref{th3}.\\
\textbf{Proof of Theorem \ref{th3}.}
Let $M$ be an equifocal hypersurface in a simply connected compact rank one symmetric space $N$, $M_+$ and $M_-$ be the focal submanifolds. One can conclude from Theorem \ref{th0} that along a normal geodesic $\exp_{x}(tv(x))$, the focal points of $M$ in $N$ with respect to $x\in M$ occur alternately and equidistantly in the two focal submanifolds $M_{+}$ and $M_{-}$ and normal geodesics are closed. Note that a simply connected compact symmetric space has rank one if and only if all its
geodesics are closed, and we will assume that the Riemannian metrics on these spaces are normalized such that their closed geodesics are
of length $2\pi$. By Remark \ref{rem42} and $2gd(M_+, M_-)=2\pi$, we can get a lower bound on $d(M_+,M_-)$ depends only on $N$. In fact, we have $d(M_+,M_-)= \pi/g\geq \pi/n$ for $\mathbb{C}P^n$, $d(M_+,M_-)= \pi/g\geq \pi/(2n+1)$ for $\mathbb{H}P^n$ and $d(M_+,M_-)= \pi/g\geq \pi/11$ for $CaP^2$. Then mimicking the proof of Theorem \ref{th1}, we finally complete the proof of Theorem \ref{th3}. \hfill $\Box$
\begin{ack}
The authors would like to thank Professor Xiaochun Rong and Professor Gudlaugur Thorbergsson for their valuable suggestions and helpful comments
during the preparation of this paper.
\end{ack}


\end{document}